\def\CC{\mathbb{C}}
\def\RR{\mathbb{R}}
\def\ZZ{\mathbb{Z}}
\def\NN{\mathbb{N}}
\def\DD{\mathbb{D}}
\def\TT{\mathbb{T}}
\def\CDD{\overline{\DD}}
\def\wi{\widetilde}
\def\pa{\partial}
\def\su{\subset}
\def\cd{\CC\setminus[1,\infty)}
\def\co{\CC\setminus[0,\infty)}
\def\OO{{\mathcal O}}
\def\cC{{\mathcal C}}
\def\eps{\varepsilon}
\def\lon{\longrightarrow}
\def\sm{\setminus}
\DeclareMathOperator{\re}{Re}
\DeclareMathOperator{\im}{Im}
\DeclareMathOperator{\dist}{dist}
\DeclareMathOperator{\inte}{int}
\DeclareMathOperator{\Arg}{Arg}
\DeclareMathOperator{\Log}{Log}
\DeclareMathOperator{\res}{Res}
\DeclareMathOperator{\ord}{ord}
\renewcommand{\phi}{\varphi}
\newtheorem{tw}{Theorem}[section]
\newtheorem{lem}[tw]{Lemma}
\newtheoremstyle{rem}{}{}{\it}{}{\bf}{.}{ }{}
\theoremstyle{rem}
\newtheorem{rem}[tw]{Remark}
\newtheoremstyle{pro}{}{}{\it}{}{\bf}{.}{ }{}
\theoremstyle{pro}
\newtheorem{pro}[tw]{Proposition}
\begin{document}
\title{Extension of holomorphic functions onto a special domain}
\author{Tomasz Warszawski}
\subjclass[2010]{30B40}
\keywords{Analytic continuation, interpolation, exponential type, indicator function, order}
\address{Institute of Mathematics, Faculty of Mathematics and Computer Science, Jagiellonian University, ul. Prof. St. \L ojasiewicza 6, 30-348 Krak\'ow, Poland}
\email{tomasz.warszawski@im.uj.edu.pl}
\thanks{This is a semester paper prepared under the guidance of Professor Pawe\l\ Doma\'nski in the framework of Ph. D. program \'Srodowiskowe Studia Doktoranckie z Nauk Matematycznych}
\begin{abstract}
We present a modified version of the Arakelyan's result: a relationship between holomorphic extension of a holomorphic function on the unit disc onto the domain $\cd$ and its Taylor coefficients' interpolation.
\end{abstract}
\maketitle
This paper aims at explaining some ideas and simplifying the proof from the paper of N. U. Arakelyan (see \cite{Ara}) in the case of the domain $\cd$. The problem is to characterize these holomorphic functions on the unit disc, which extend holomorphically on $\cd$. There are known several such conditions, all connected with the existence of a holomorphic function on the half-plane or the plane, interpolating Taylor coefficients of the given function and having controlled growth at infinity. This growth is measured by the (inner) exponential type, the indicator function or the order.
\section{Notation and results}
By $\DD$ we denote the unit open disc on the complex plane and by $\TT$ --- the unit circle. Moreover, let \begin{eqnarray*}H&:=&\{z\in\CC:\re z\geq 0\},\\\Delta(\theta_1,\theta_2)&:=&\{z\in\CC:\theta_1\leq\arg z\leq\theta_2\}\cup\{0\},\quad-\pi\leq\theta_1\leq\theta_2\leq\pi,\end{eqnarray*} where $$\arg:\CC\sm\{0\}\longrightarrow(-\pi,\pi],\text{ continuous in }\CC\sm(-\infty,0],$$ is the main argument. Sets $\Delta(\theta_1,\theta_2)$ we shall call \emph{closed sectors}. Let $\log$ be the standard branch of the logarithm (continuous in $\CC\sm(-\infty,0]$) that is $\log z:=\log|z|+i\arg z$, $z\neq 0$. Put $\log 0:=-\infty$ and $\log^+t:=\max\{\log t,0\}$ for $t\geq 0$. The interior of a set $S\su\CC$ we denote by $\inte S$. 

Let $G\su\CC$ be a closed unbounded connected set and let a function $\phi$ be holomorphic in an open neighborhood of $G$ (which we denote $\phi\in\OO(G)$). The \emph{exponential type} of $\phi$ on $G$ is defined as \begin{equation}\label{et} ET_G(\phi):=\limsup_{z\in G,\ z\to\infty}\frac{\log^+|\phi(z)|}{|z|}=\max\left\{\limsup_{z\in G,\ z\to\infty}\frac{\log|\phi(z)|}{|z|},\ 0\right\}.\end{equation} Note that \begin{equation}\label{5} ET_G(\phi+\psi)\leq\max\{ET_G(\psi),ET_G(\phi)\},\quad\phi,\psi\in\OO(G).\end{equation}

Assume that $D=\inte\Delta\neq\emptyset$ for some closed sector $\Delta$ and $\phi\in\OO(D)$. Then the \emph{inner exponential type} of $\phi$ on $D$ we define by \begin{equation}\label{iet} IET_D(\phi):=\sup\{ET_{\wi\Delta}(\phi):\wi\Delta\su D\cup\{0\}\text{ and }\wi\Delta\text{ is a closed sector}\}.\end{equation} We easily see that $$IET_D(\phi+\psi)\leq\max\{IET_D(\phi),IET_D(\psi)\},\quad\phi,\psi\in\OO(D),$$ and $$IET_D(\phi)\leq ET_{\Delta}(\phi),\quad\phi\in\OO(\Delta).\bigskip\bigskip$$

We are ready to formulate 
\begin{tw}[Arakelyan, \cite{Ara}, Theorem 1.1]\label{tw1}
Let a function $f(z)=\sum_{n=0}^\infty a_nz^n$ be holomorphic in $\DD$. Then $f$ extends holomorphically on $\cd$ if and only if there exists a function $\phi\in\OO(H)$ such that 
\begin{enumerate}[$(i)$]
\item $\phi(n)=a_n,\ n\geq 0;$
\item $IET_{\inte H}(\phi)=0$.\\
\end{enumerate}
\end{tw}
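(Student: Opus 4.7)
For sufficiency, I would use a Mellin--Barnes type integral to sum the series. The kernel $\pi/\sin(\pi w)$ has simple poles at every integer with residue $(-1)^{n}$ at $w=n$ and decays like $e^{-\pi|\im w|}$ as $|\im w|\to\infty$. I would first establish that for $|z|$ small,
$$f(z) = \frac{1}{2\pi i}\int_{L} \phi(w)(-z)^{w}\frac{\pi}{\sin\pi w}\, dw,$$
where $L$ is the vertical line $\re w = -\eps$ (small $\eps>0$, so that $L$ lies in the domain of $\phi$), oriented upward. This identity follows by closing $L$ to a rectangle of right edge $\re w = N+1/2$, collecting residues $\phi(n)z^{n}$ for $n=0,\ldots,N$, and letting $N\to\infty$: the bound $|\phi(w)|\le C_{\eta}e^{\eta|w|}$ from $IET_{\inte H}(\phi)=0$ kills the three remaining sides for $|z|<1$.

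For arbitrary $z\in\cd\setminus\DD$ one has $|\arg(-z)|<\pi$, so on $L$ the combined estimate, for $|\im w|$ large, reads
$$|\phi(w)(-z)^{w}\pi/\sin\pi w| \leq C\, e^{(\eta-\pi+|\arg(-z)|)|\im w|},$$
which decays exponentially as $|\im w|\to\infty$ whenever $\eta<\pi-|\arg(-z)|$. Since $IET=0$ permits $\eta$ arbitrarily small, the integral converges absolutely for every $z\in\cd\setminus\DD$ and defines a holomorphic function there. Gluing with the Taylor series on $\DD$ gives the desired extension.

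For necessity, given $\tilde f\in\OO(\cd)$, I would set
$$\phi(w) := \frac{1}{2\pi i}\int_{\Gamma}\tilde f(z)\, z^{-w-1}\, dz,$$
where $z^{-w-1}$ uses the principal branch of $\log z$ (continuous across $(0,\infty)$) and $\Gamma$ is a Hankel-type contour in $\cd$: from $+\infty$ just above the cut, leftward to near $z=1$, a small loop around $1$, and back to $+\infty$ just below. Closing $\Gamma$ by a large arc in $\cd$ (keyholing around $[1,R]$) produces a closed curve enclosing $0$ once, whence Cauchy's theorem gives $\phi(n)=a_{n}$ for integer $n$. Collapsing $\Gamma$ onto the cut yields
$$\phi(w) = -\frac{1}{2\pi i}\int_{1}^{\infty}(\tilde f_{+}(t)-\tilde f_{-}(t))\,t^{-w-1}\, dt,$$
with $\tilde f_{\pm}$ the boundary values of $\tilde f$ on $[1,\infty)$. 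Since $|t^{-w-1}|=t^{-\re w-1}$ is independent of $\im w$, one has $|\phi(w)|\le M(\re w)$ uniformly in $\im w$; on any closed sector $\wi\Delta\su\inte H$ one has $\re w\ge c|w|$ for some $c=c(\wi\Delta)>0$ and $M(u)\to 0$ as $u\to\infty$, so $ET_{\wi\Delta}(\phi)=0$.

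The main obstacle is the growth of $\tilde f$ at infinity: the Hankel integral and the jump formula above require $\tilde f$ to decay (at least polynomially with a good exponent) on $[1,\infty)$ and to possess boundary values in a classical sense. I would reduce to this case by subtracting a polynomial $P$ from $\tilde f$ of sufficiently high degree, whose contribution to $\phi$ is itself polynomial and hence does not affect $IET$; alternatively, by integrating by parts in the Hankel integral, thereby replacing $\tilde f$ by iterated primitives that vanish at infinity. A finer point is verifying that the resulting $\phi$ extends holomorphically to a full open neighborhood of $H$, which is accomplished by a small deformation of $\Gamma$ into the interior of $\cd$ away from the cut.
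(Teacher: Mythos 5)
Your overall strategy---a Lindel\"of-type summation kernel for the sufficiency and a Mellin transform over a Hankel contour for the necessity---is the same one the paper uses (its kernel $z^{\zeta}/(e^{2\pi i\zeta}-1)$ is equivalent to your $(-z)^{w}\pi/\sin\pi w$), but both halves of your argument have genuine gaps.

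For sufficiency, the hypothesis $IET_{\inte H}(\phi)=0$ controls $\phi$ only on closed sectors $\Delta(-\theta,\theta)$ with $\theta<\pi/2$; it gives no bound whatsoever on or near the imaginary axis, let alone on the line $\re w=-\eps$. So the estimate $|\phi(w)|\le C_{\eta}e^{\eta|w|}$ that you invoke on $L$ and on the horizontal sides of your rectangles is simply not available there, and the integral over $L$ need not even converge. This is precisely why the paper integrates over $\Gamma_m=\pa(\Delta(-\theta,\theta)\sm\DD(0,m+1/2))$, i.e.\ over two rays at angles $\pm\theta$ strictly inside the half-plane (with $\theta$ chosen so that $b\cot\theta-a<0$ for the given compact $K\su\co$), at the price of representing only the tail $\sum_{n>m}\phi(n)z^{n}$ and letting the aperture depend on $K$. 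Your argument becomes correct once $L$ is replaced by such sector boundaries, but then it is the paper's proof. Note also that the paper does not prove Theorem~\ref{tw1} directly: it deduces it from Theorem~\ref{tw3} via Levin's theorem on indicator functions, because passing between the ray-wise condition $h_{\phi}\le 0$ and the sector-wise condition $ET_{\wi\Delta}(\phi)=0$ is itself a nontrivial Phragm\'en--Lindel\"of step.

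For necessity, the fatal point is that the extension $\wi f\in\OO(\cd)$ comes with no growth control near the cut or at infinity, and neither of your proposed fixes addresses this. Subtracting a polynomial alters $\wi f$ only by polynomially growing terms and cannot tame growth like $\exp(\exp|z|)$ along $[1,\infty)$; integrating by parts finitely many times likewise gains only polynomial factors; and $\wi f$ need not have classical boundary values on the cut at all. The paper's route around this is Keldysh's approximation theorem (Theorem~\ref{tw6}): one finds an entire $g$ with $|f-g|\le\eps e^{-|z|^{\eta}}$ off a neighborhood of $[0,\infty)$, applies your kind of Hankel/Mellin integral to $F=f-g$ (where the forced decay makes everything converge and yields $h_{\phi}(\wi\theta)\le 2\delta$ for every $\delta$), and treats the entire part $g$ separately after splitting it, via Alexander's Carleman-type theorem, into two entire pieces each decaying like $e^{-|\zeta|^{\eta}}$ on a ray. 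Without an ingredient of this strength the necessity direction does not close.
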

For a sector $\Delta=\Delta(\theta_1,\theta_2)$ and a function $\phi\in\OO(\Delta)$, we introduce the \emph{indicator function} $$h_\phi(\theta):=\limsup_{R>0,\ R\to\infty}\frac{\log|\phi(Re^{i\theta})|}{R},\quad\theta\in[\theta_1,\theta_2].$$ Similarly as before, \begin{equation}\label{6} h_{\phi+\psi}\leq\max\{h_\phi,h_\psi\},\quad\phi,\psi\in\OO(\Delta).\bigskip\bigskip\end{equation}

Using this concept allows us to state
\begin{tw}[Arakelyan, \cite{Ara}, Theorem 1.1]\label{tw2}
Let a function $f(z)=\sum_{n=0}^\infty a_nz^n$ be holomorphic in $\DD$. Then $f$ extends holomorphically on $\cd$ if and only if there exists a function $\phi\in\OO(H)$ such that 
\begin{enumerate}[$(i)$]
\item $\phi(n)=a_n,\ n\geq 0;$
\item $IET_{\inte H}(\phi)<\infty;$
\item $h_\phi(\theta)\leq 0,\ |\theta|<\pi/2$.\\
\end{enumerate}
\end{tw}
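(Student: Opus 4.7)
The plan is to derive Theorem~\ref{tw2} from Theorem~\ref{tw1} by showing that, for any $\phi\in\OO(H)$, conditions (ii) and (iii) of Theorem~\ref{tw2} together are equivalent to the single condition $IET_{\inte H}(\phi)=0$ from Theorem~\ref{tw1}. Given such an equivalence, both implications of Theorem~\ref{tw2} reduce to the corresponding implications of Theorem~\ref{tw1}.

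\emph{Easy direction: $IET_{\inte H}(\phi)=0\Rightarrow$(ii)$+$(iii).} Condition (ii) is trivial. For (iii), fix $\theta\in(-\pi/2,\pi/2)$ and pick a closed sector $\wi\Delta\su\inte H\cup\{0\}$ that contains the ray of argument $\theta$. Then $ET_{\wi\Delta}(\phi)=0$, so for every $\eps>0$ there is $C_\eps$ with $|\phi(z)|\leq C_\eps e^{\eps|z|}$ on $\wi\Delta$. Restricting to the ray and passing to the $\limsup$ gives $h_\phi(\theta)\leq\eps$, hence $h_\phi(\theta)\leq 0$.

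\emph{Main direction: (ii)$+$(iii)$\Rightarrow IET_{\inte H}(\phi)=0$.} Take an arbitrary closed sector $\wi\Delta=\Delta(\theta_1,\theta_2)\su\inte H\cup\{0\}$ and enlarge it slightly to $\wi\Delta'=\Delta(\theta_1',\theta_2')\su\inte H\cup\{0\}$ with $\theta_1'<\theta_1\leq\theta_2<\theta_2'$. By (ii), $ET_{\wi\Delta'}(\phi)\leq IET_{\inte H}(\phi)<\infty$; by (iii), $h_\phi(\theta_1')\leq 0$ and $h_\phi(\theta_2')\leq 0$. Fix $\eps>0$, set $\alpha':=\max\{|\theta_1'|,|\theta_2'|\}<\pi/2$ and $c:=1/\cos\alpha'$, and consider the auxiliary function $\psi(z):=\phi(z)e^{-\eps c z}$. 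It is holomorphic on a neighborhood of $\wi\Delta'$, of finite exponential type there, and on the boundary rays
\[|\psi(Re^{i\theta_j'})|=|\phi(Re^{i\theta_j'})|\,e^{-\eps c R\cos\theta_j'}\leq e^{\eps R}\cdot e^{-\eps c R\cos\theta_j'}\leq 1\quad\text{for }R\text{ large},\]
since $c\cos\theta_j'\geq 1$; together with local boundedness near $0$ this gives a uniform bound $|\psi|\leq M_\eps$ on $\pa\wi\Delta'$. A Phragm\'en--Lindel\"of argument on $\wi\Delta'$ (opening $<\pi$) then yields $|\psi|\leq M_\eps$ on the whole sector: multiply $\psi$ by $e^{-\delta z^\beta}$ for some $1<\beta<\pi/(\theta_2'-\theta_1')$, with the branch of $z^\beta$ chosen so that $\re z^\beta>0$ on $\wi\Delta'$ (a preliminary rotation centers the sector on the positive real axis); the perturbed function decays at infinity in the sector, so the maximum principle on truncated sectors bounds it by $M_\eps$, and sending $\delta\to 0^+$ removes the perturbation. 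Consequently $|\phi(z)|\leq M_\eps e^{\eps c|z|}$ on $\wi\Delta'$, giving $ET_{\wi\Delta}(\phi)\leq ET_{\wi\Delta'}(\phi)\leq\eps c$. Letting $\eps\to 0^+$ yields $ET_{\wi\Delta}(\phi)=0$, and since $\wi\Delta$ was arbitrary, $IET_{\inte H}(\phi)=0$.

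The main obstacle is the Phragm\'en--Lindel\"of step, and in particular the choice of the \emph{single} multiplier $e^{-\eps c z}$ that must beat the boundary growth $e^{\eps R}$ on \emph{both} edges of $\wi\Delta'$ simultaneously; this is what forces the constant $c=1/\cos\alpha'$ and explains why one passes from $\wi\Delta$ to a slightly larger $\wi\Delta'$ (so that both boundary rays are genuinely interior to $\inte H$ and the indicator hypothesis applies on them). The rotation needed to realize $\re z^\beta>0$ on the sector, and the fact that the resulting bound $\eps c$ does not depend on $\eps$ through $c$, are the technical points to verify; once settled, Theorem~\ref{tw1} closes the argument.
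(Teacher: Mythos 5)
Your reduction has a structural problem in the context of this paper: you derive Theorem \ref{tw2} from Theorem \ref{tw1}, but the paper never establishes Theorem \ref{tw1} independently --- its only proof is Proposition \ref{pro1}, which derives Theorem \ref{tw1} \emph{from} Theorem \ref{tw2}, which in turn is obtained from Theorem \ref{tw3} (the one theorem proved from scratch). So, as written, your argument is circular within the paper's logical scheme: the statement you take as your starting point is downstream of the statement you are trying to prove. The paper's own proof of Theorem \ref{tw2} is the one-line specialization of Theorem \ref{tw3}: sufficiency is the sufficiency part of Theorem \ref{tw3} with $D:=\inte H$, and necessity holds because the entire function produced by the necessity part of Theorem \ref{tw3} lies in $\OO(H)$, interpolates the $a_n$, satisfies $IET_{\inte H}(\phi)\leq ET_H(\phi)<\infty$, and has $h_\phi(\theta)\leq 0$ for $|\theta|<\pi/2$. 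To make your route legitimate you would need an independent proof of Theorem \ref{tw1}, which is no easier than proving Theorem \ref{tw3} itself.

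That said, the mathematical core of your proposal --- the equivalence, for $\phi\in\OO(H)$ satisfying $(i)$, of ``$(ii)$ and $(iii)$'' with ``$IET_{\inte H}(\phi)=0$'' --- is correct, and it is precisely the content of the paper's Proposition \ref{pro1}, used there in the opposite direction to extract Theorem \ref{tw1} from Theorem \ref{tw2}. Where the paper invokes Levin's Theorem \ref{tw5} on the indicator function to pass from $h_\phi\leq 0$ on two rays to a type estimate on the sector between them, you give a direct Phragm\'en--Lindel\"of argument with the multiplier $e^{-\eps cz}$, $c=1/\cos\alpha'$, on a slightly enlarged sector $\wi\Delta'$. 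The details check out: the opening of $\wi\Delta'$ is less than $\pi$ while $\psi$ has order at most $1$ and finite type there by $(ii)$, so the classical Phragm\'en--Lindel\"of principle applies, and since $c$ does not depend on $\eps$ the conclusion $ET_{\wi\Delta}(\phi)=0$ follows. This is a sound and self-contained replacement for the citation of Levin --- but it belongs in the proof of Proposition \ref{pro1}, not in a proof of Theorem \ref{tw2}.
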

The \emph{order} of an entire function $\phi$ is defined as $$\ord\phi:=\inf\{c\geq 0:|\phi(z)|\leq e^{|z|^c}\text{ as }z\to\infty\}.$$ We have $$\ord\phi=\limsup_{R>0,\ R\to\infty}\frac{\log\log\max\{|\phi(z)|:|z|=R\}}{\log R},\quad\phi\neq\text{const}.$$ 

If $\Delta$ is a closed sector with non-empty interior, we call a domain $D\su\Delta$ \emph{asymptotic} to $\Delta$ if $$\lim_{z\in\Delta\setminus D,\ z\to\infty}\frac{\dist(z,\pa\Delta)}{|z|}=0.$$

Note that the definition of $IET_D(\phi)$ extends in the case of such a domain $D$ and functions $\phi\in\OO(D)$, where the `sup' in \eqref{iet} is taken over all closed sectors $\wi\Delta\su\inte\Delta\cup\{0\}$. Indeed, for any such $\wi\Delta$ there is $R>0$ such that $\wi\Delta\sm R\DD\su D$, so the sup-limit in \eqref{et} still makes sense.\bigskip\bigskip

The following theorem is the most general. We will prove that Theorems \ref{tw1} and \ref{tw2} follow from it.
\begin{tw}[Arakelyan, \cite{Ara}, Theorem 1.2]\label{tw3}
In order that a holomorphic function $f(z)=\sum_{n=0}^\infty a_nz^n$, $z\in\DD$, extend holomorphically on $\cd$, the following condition is sufficient.

There exist a domain $D$ asymptotic to $H$, a function $\phi\in\OO(D)$ and a number $n_0\in\NN$ such that 
\begin{enumerate}[$(i)$]
\item $\phi(n)=a_n,\ n\geq n_0;$
\item $IET_{D}(\phi)<\infty;$
\item $h_\phi(\theta)\leq 0,\ |\theta|<\pi/2$.\\
\end{enumerate}
The following condition is necessary.

There exists an entire function $\phi$ such that
\begin{enumerate}[$(i)$]
\setcounter{enumi}{3}
\item $\phi(n)=a_n,\ n\geq 0;$
\item $\phi(n)=0,\ n=-1,-2,\ldots;$
\item $ET_{H}(\phi)<\infty;$
\item \label{3} $h_\phi(\theta)\leq 0,\ |\theta|<\pi/2;$
\item $\ord\phi\leq 1$.\\
\end{enumerate}
\end{tw}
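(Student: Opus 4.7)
The theorem consists of two independent implications, which I would prove by dual contour-integral constructions: a Lindel\"of-type summation formula to build $f$ from $\phi$, and a Cauchy-type representation to build $\phi$ from $f$.

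For the sufficient direction, the plan is to reconstruct $f$ on $\cd$ from the interpolating values $\phi(n)$. The kernel $\pi/\sin\pi w$ has residue $(-1)^n$ at the integer $n$, and setting $(-z)^w:=e^{w\log(-z)}$ via the principal branch (so $z\in\CC\sm[0,\infty)$) makes the residue of $\phi(w)(-z)^w/\sin\pi w$ at $w=n$ equal to $\phi(n)z^n/\pi$. I would therefore define
$$F(z):=\frac{1}{2i}\int_L\frac{\phi(w)(-z)^w}{\sin\pi w}\,dw,$$
where $L\su D$ is an unbounded contour which, for $|w|$ large, follows the boundary of a closed sub-sector $\wi\Delta\su\inte H\cup\{0\}$ of aperture close to $\pi$; such $\wi\Delta$ is eventually contained in $D$ by the asymptotic-to-$H$ hypothesis. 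Closing $L$ via an arc at infinity inside $\wi\Delta$ and summing residues yields $F(z)=\sum_{n\geq n_0}\phi(n)z^n=f(z)-\sum_{n<n_0}a_nz^n$ on a small neighborhood of $0$. Since $F$ is holomorphic on $\CC\sm[0,\infty)$ and $\DD\cup(\CC\sm[0,\infty))=\cd$, gluing $F+\sum_{n<n_0}a_nz^n$ with $f|_\DD$ produces the required extension. Convergence of the integral and the vanishing of the large-arc contribution rely on $|1/\sin\pi w|\sim 2e^{-\pi|\im w|}$, $|(-z)^w|=|z|^{\re w}e^{-\arg(-z)\im w}$ with $|\arg(-z)|<\pi$, together with the exponential-type bound $|\phi(w)|\leq e^{c|w|}$ on $\wi\Delta$ from $IET_D(\phi)<\infty$ and the sub-exponential radial control from $h_\phi(\theta)\leq 0$ on $|\theta|<\pi/2$.

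For the necessary direction, given $f\in\OO(\cd)$ I would produce $\phi$ as a Cauchy-type integral transform
$$\phi(w):=\frac{1}{2\pi i}\int_{\gamma_R}f(\zeta)\,\zeta^{-w-1}\,d\zeta,$$
with $\zeta^{-w-1}=e^{-(w+1)\log\zeta}$ using a branch of $\log$ adapted to $\cd$, and $\gamma_R\su\cd$ chosen so that for $w=n\geq 0$ the integral recovers the Cauchy coefficient $a_n$, while for $w=n\leq-1$ the integrand is holomorphic across the enclosed region and the integral vanishes. Concretely, one may take $\gamma_R$ to be the positively oriented boundary of a slit disc $\{|\zeta|<R\}\sm[1,R]$ in $\cd$, with an auxiliary cut from $0$ to the branch cut making $\log$ single-valued in a neighborhood of $\gamma_R$. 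Standard deformation arguments show that $\phi$ is entire in $w$ and independent of $R$. Estimating $|\zeta^{-w-1}|=|\zeta|^{-\re w-1}e^{\arg\zeta\cdot\im w}$ on $\gamma_R$ (with $|\arg\zeta|$ bounded by the chosen branch) and optimizing $R$ as a function of $|w|$ yields properties (vi) and (vii); (viii) is then a Phragm\'en--Lindel\"of argument in the left half-plane, transporting the right-half-plane bound by exploiting the zeros $\phi(-1)=\phi(-2)=\dots=0$.

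The principal obstacle is in the sufficient direction: inside the a priori arbitrary asymptotic domain $D$ one must choose $L$ to lie in $D$, pass close enough to the imaginary axis for the decay of $1/\sin\pi w$ to dominate the combined growth of $\phi$ and $(-z)^w$, and yet avoid the regions where only the weaker radial control $h_\phi$ is available. Quantifying this interplay---between the asymptotic-to-$H$ hypothesis, the sectorial $IET_D$ bound, and the radial $h_\phi$ bound---is the technical crux of the argument.
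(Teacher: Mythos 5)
Your sufficiency argument is, up to the choice of kernel, the paper's own: the paper integrates $\phi(\zeta)z^\zeta/(e^{2\pi i\zeta}-1)$ over the boundary of $\Delta(-\theta,\theta)\sm\DD(0,m+1/2)$ with the half-aperture $\theta$ close to $\pi/2$ and chosen in terms of the compact $K\su\co$, sums residues, and glues the resulting local extensions by the Monodromy Theorem; your $\pi/\sin\pi w$ with $(-z)^w$ is the same device. One point you should make explicit: condition $(iii)$ is only a radial $\limsup$ on each separate ray, and to control the integrand uniformly along the whole contour you must upgrade it, via Levin's theorem on the indicator (Theorem \ref{tw5}, using $(ii)$ to know the growth exponent is at most $1$), to the uniform statement $\log|\phi(\zeta)|\leq\eps|\zeta|$ on closed subsectors. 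Without that step the "sub-exponential radial control" you invoke does not suffice.

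The necessity half has a genuine gap. Your contour $\pa(\{|\zeta|<R\}\sm[1,R])$ runs along both sides of the slit $[1,R]$, where $f\in\OO(\cd)$ need not have boundary values at all, and on the circle $|\zeta|=R$ the supremum of $|f|$ can grow arbitrarily fast with $R$; so the transform is not even defined as written, and no optimization of $R$ can yield $(vi)$, $(vii)$ or $(viii)$, since every estimate is polluted by an uncontrollable factor $\sup_{\gamma_R}|f|$. This is precisely the obstruction the paper's proof is designed to remove: it first applies Keldysh's tangential approximation theorem (Theorem \ref{tw6}) to write $f=g+F$ with $g$ entire and $|F(\zeta)|\leq\eps e^{-|\zeta|^\eta}$ off a neighborhood of $[0,\infty)$, then splits $g=g_1+g_2$ by Alexander's Carleman-type theorem (Lemma \ref{4}) so that each $g_j$ decays like $e^{-|\zeta|^\eta}$ on a chosen ray, and only then applies the Mellin-type transform \eqref{14} along contours --- a small circle plus rays --- that avoid $[1,\infty)$ and on which the integrand genuinely decays. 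The indicator bound $(vii)$ is then obtained by deforming the contour for $F$ to $\pa D_r^\delta$ and letting $\delta\to 0$, not by a Phragm\'en--Lindel\"of transfer exploiting the zeros at the negative integers, which by itself gives no such bound. Without an approximation and decomposition step of this kind, your construction of $\phi$ does not get off the ground.
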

\smallskip
\begin{rem} 
The conditions $h_\phi(0)\leq 0$ and $\phi(n)=a_n$ for big $n\in\NN$, imply that a formal series  $\sum_{n=0}^\infty a_nz^n$ is convergent in $\DD$. 
\end{rem}
Indeed, we have $$\limsup_{n\to\infty}\frac{\log|a_n|}{n}\leq 0$$ or equivalently $$\limsup_{n\to\infty}|a_n|^{1/n}\leq 1.$$
\medskip
\begin{rem}
Theorem \ref{tw2} follows from Theorem \ref{tw3}. 
\end{rem}
Actually, it suffices to take $D:=\inte H$.
\bigskip
\begin{pro}\label{pro1}
Theorem \ref{tw1} follows from Theorem \ref{tw2}.
\end{pro}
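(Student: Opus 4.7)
The plan is to deduce Theorem \ref{tw1} from Theorem \ref{tw2} by showing that their respective hypotheses on the interpolating function $\phi$ are equivalent; once this is done, the biconditional of Theorem \ref{tw1} is inherited from that of Theorem \ref{tw2}.

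One implication is immediate. If $\phi\in\OO(H)$ satisfies $IET_{\inte H}(\phi)=0$, then trivially $IET_{\inte H}(\phi)<\infty$, and for every $\theta\in(-\pi/2,\pi/2)$ the ray $\{Re^{i\theta}:R>0\}$ lies in some closed sector $\wi\Delta\su\inte H\cup\{0\}$, so $h_\phi(\theta)\leq ET_{\wi\Delta}(\phi)\leq IET_{\inte H}(\phi)=0$. Hence all hypotheses of Theorem \ref{tw2} are fulfilled, and $f$ extends holomorphically onto $\cd$.

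The substantive direction is the converse: starting from $\phi$ with $\phi(n)=a_n$, $IET_{\inte H}(\phi)<\infty$, and $h_\phi(\theta)\leq 0$ on $(-\pi/2,\pi/2)$ (which Theorem \ref{tw2} delivers once $f$ extends), I want to upgrade the inner exponential type to $0$. I would argue sector by sector via Phragm\'en--Lindel\"of. Fix any closed sector $\wi\Delta=\Delta(\theta_1,\theta_2)\su\inte H\cup\{0\}$; then $-\pi/2<\theta_1\leq\theta_2<\pi/2$, so $\cos\theta_i>0$ and the opening $\theta_2-\theta_1$ is strictly less than $\pi$. For each $\eps>0$ set $\phi_\eps(z):=\phi(z)e^{-\eps z}$. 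On the boundary ray $\arg z=\theta_i$ the bound $h_\phi(\theta_i)\leq 0$ together with the damping $|e^{-\eps z}|=e^{-\eps R\cos\theta_i}$ ensures that $|\phi_\eps|$ is bounded by some constant $M_\eps<\infty$. In the interior of $\wi\Delta$ the function $\phi_\eps$ still has finite exponential type, hence order $1$. Since $1<\pi/(\theta_2-\theta_1)$, Phragm\'en--Lindel\"of for a sector of opening smaller than $\pi$ yields $|\phi_\eps|\leq M_\eps$ throughout $\wi\Delta$. Therefore $|\phi(z)|\leq M_\eps e^{\eps|z|}$ on $\wi\Delta$, which gives $ET_{\wi\Delta}(\phi)\leq\eps$; letting $\eps\to 0^+$ yields $ET_{\wi\Delta}(\phi)=0$, and since $\wi\Delta$ was arbitrary, $IET_{\inte H}(\phi)=0$.

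The main obstacle is the Phragm\'en--Lindel\"of step, and in particular the choice of damping factor. It is crucial to work with a sector strictly inside $\inte H$ so that $\cos\theta_i>0$ on both boundary rays; otherwise the factor $e^{-\eps z}$ would not suffice to bound $|\phi_\eps|$ on the boundary. The form $e^{-\eps z}$ is tailored to the right half-plane $\inte H$, and the strict inclusion $\wi\Delta\su\inte H\cup\{0\}$ is exactly what makes this choice work uniformly on the closed subsector.
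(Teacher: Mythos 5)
Your argument is correct, and your easy direction coincides with the paper's (the paper takes the degenerate sector $\Delta(\theta,\theta)$ to read off $h_\phi(\theta)\leq ET_{\Delta(\theta,\theta)}(\phi)\leq IET_{\inte H}(\phi)=0$). For the substantive direction, however, you take a genuinely different route. The paper fixes a closed sector $\Delta\su\inte H\cup\{0\}$, observes that $IET_{\inte H}(\phi)<\infty$ forces the sectorial order $\rho$ of \eqref{13} to be at most $1$, disposes of the case $\rho<1$ by hand, and in the case $\rho=1$ invokes Levin's Theorem \ref{tw5}, which converts the pointwise hypothesis $h_\phi\leq 0$ into the uniform bound $\log|\phi(re^{i\theta})|\leq\eps r$ on the whole sector; this immediately gives $ET_\Delta(\phi)=0$. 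You instead prove that uniform bound directly: multiply by $e^{-\eps z}$, use $h_\phi(\theta_i)\leq 0$ together with $\cos\theta_i>0$ to bound $\phi_\eps$ on the two boundary rays, and apply the classical Phragm\'en--Lindel\"of theorem for a sector of opening less than $\pi$ --- legitimate because $ET_{\wi\Delta}(\phi)\leq IET_{\inte H}(\phi)<\infty$ supplies the required a priori growth of order $1<\pi/(\theta_2-\theta_1)$. In effect you inline exactly the special case of Levin's theorem that is needed, which makes the proof self-contained and removes the case distinction on $\rho$; the paper's version is shorter on the page but leans on a heavier cited result about proximate orders. The one point worth adding to your writeup is the degenerate sector $\theta_1=\theta_2$, where Phragm\'en--Lindel\"of does not apply but the conclusion $ET_{\wi\Delta}(\phi)=\max\{h_\phi(\theta_1),0\}=0$ is immediate from condition $(iii)$ (or simply note that every closed sector in $\inte H\cup\{0\}$ is contained in a nondegenerate one, so the supremum defining $IET$ is unaffected).
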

To see it, we shall use the following
\begin{tw}[Levin, \cite{Lev}, Chapter 1, \S 18, Theorem 28, case $\rho(r)\equiv\rho=1$]\label{tw5}
Let $\Delta=\Delta(\theta_1,\theta_2)$ be a closed sector and let a function $\phi\in\OO(\Delta)$ be such that \begin{equation}\label{13}\rho:=\inf\{c\geq 0:|\phi(z)|\leq e^{|z|^c}\text{ for }z\in\Delta,\ z\to\infty\}\end{equation} is equal to $1$. Then for any $\eps>0$ there is $r_\eps>0$ such that $$\frac{\log|\phi(re^{i\theta})|}{r}\leq h_\phi(\theta)+\eps,\quad r\geq r_\eps,\quad\theta_1\leq\theta\leq\theta_2.$$
\end{tw}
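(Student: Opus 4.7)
The plan is to combine a Phragmén--Lindelöf argument on small subsectors with a compactness reduction on $[\theta_1,\theta_2]$, relying on the trigonometric convexity of $h_\phi$ (which itself comes from Phragmén--Lindelöf and yields continuity of $h_\phi$ on the open interval). By compactness of $[\theta_1,\theta_2]$, it suffices to show that for every $\theta_0\in[\theta_1,\theta_2]$ the required bound holds uniformly in $\theta$ on some neighborhood of $\theta_0$ (intersected with $[\theta_1,\theta_2]$), for $r$ beyond some threshold $r(\theta_0)$; the maximum of finitely many such thresholds then serves as $r_\eps$.

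For the local step, fix $\eps>0$ and $\theta_0\in[\theta_1,\theta_2]$, and choose $\delta>0$ small with $2\delta<\pi$; put $\alpha:=\max(\theta_1,\theta_0-\delta)$, $\beta:=\min(\theta_2,\theta_0+\delta)$. By the definition of the indicator function, there is $r_1>0$ with
$$\log|\phi(re^{i\theta})|\leq\bigl(h_\phi(\theta)+\eps/4\bigr)r,\qquad\theta\in\{\alpha,\beta\},\ r\geq r_1.$$
I pick $C\in\CC$ so that the cosine function $H(\theta):=\re(Ce^{i\theta})=|C|\cos(\theta+\arg C)$ satisfies $H(\alpha)=h_\phi(\alpha)+\eps/4$ and $H(\beta)=h_\phi(\beta)+\eps/4$; this $C$ is uniquely determined because $\beta-\alpha<\pi$. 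Put $\psi(z):=\phi(z)e^{-Cz}$. Then $|\psi|\leq 1$ on the two boundary rays $\{\arg z=\alpha\}$, $\{\arg z=\beta\}$ for $|z|\geq r_1$, while $|\psi|\leq M$ on the arc $\{|z|=r_1,\ \alpha\leq\arg z\leq\beta\}$ for some $M$. Since $\phi$ has growth $\leq e^{|z|^{1+\eta}}$ for any $\eta>0$ by the hypothesis $\rho=1$, the same holds for $\psi$, and with the opening $\beta-\alpha\leq 2\delta<\pi$ one may choose $\eta$ small enough to apply Phragmén--Lindelöf; the conclusion is $|\psi(z)|\leq\max(1,M)$ throughout the subsector, i.e.,
$$|\phi(re^{i\theta})|\leq M'e^{H(\theta)r},\qquad\alpha\leq\theta\leq\beta,\ r\geq r_1.$$

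Taking logarithms and dividing by $r$, it remains to check $H(\theta)\leq h_\phi(\theta)+\eps$ on $[\alpha,\beta]$ for $\delta$ small enough. By trigonometric convexity, $h_\phi$ is continuous on the open interval, so for $\delta$ small the quantities $h_\phi(\alpha),h_\phi(\beta)$ and $h_\phi(\theta)$ (for $\theta\in[\alpha,\beta]$) lie within $\eps/8$ of $h_\phi(\theta_0)$. A direct expansion of the cosine on an interval of length $\leq 2\delta$ shows that $H(\theta)$ is within $\eps/8$ of $h_\phi(\theta_0)+\eps/4$ uniformly in $\theta\in[\alpha,\beta]$, once $\delta$ is small enough depending on $|h_\phi(\theta_0)|$. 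Combining, $H(\theta)\leq h_\phi(\theta_0)+3\eps/8\leq h_\phi(\theta)+\eps$, which concludes the local argument.

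The main obstacle is the trigonometric convexity (and hence continuity on the open interval) of $h_\phi$, which is not automatic and must be established first. It is obtained by an analogous Phragmén--Lindelöf argument: for $\alpha<\gamma<\beta$ with $\beta-\alpha<\pi$, construct the cosine $\tilde H$ with $\tilde H(\alpha)=h_\phi(\alpha)+\eta$, $\tilde H(\beta)=h_\phi(\beta)+\eta$, show via Phragmén--Lindelöf that $\log|\phi(re^{i\gamma})|/r\leq\tilde H(\gamma)+o(1)$, and let $\eta\to 0^+$ to obtain the convexity inequality. A secondary subtlety concerns the endpoints $\theta=\theta_1,\theta_2$ (where $h_\phi$ is only upper semicontinuous); here one works with one-sided subsectors $[\theta_1,\theta_1+2\delta]$, $[\theta_2-2\delta,\theta_2]$, and invokes the pointwise definition of $h_\phi$ directly on the boundary ray as one of the two boundary bounds.
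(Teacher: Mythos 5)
The paper does not prove this statement at all: it is imported verbatim from Levin's monograph (Chapter 1, \S 18, Theorem 28) and used as a black box, so there is no internal proof to compare yours against. Your outline is the standard argument for this result --- essentially the one in the cited source: establish trigonometric convexity of $h_\phi$ via Phragm\'en--Lindel\"of on sectors of opening less than $\pi$, deduce continuity of $h_\phi$, then run a second Phragm\'en--Lindel\"of comparison against the interpolating sinusoid $H(\theta)=\re(Ce^{i\theta})$ on small subsectors and patch by compactness of $[\theta_1,\theta_2]$. The interior part of your argument is sound, including the point that $\rho=1$ gives growth $e^{|z|^{c}}$ for every $c>1$, which is enough to legitimize Phragm\'en--Lindel\"of on any subsector of opening $2\delta<\pi$, and the estimate showing $H(\theta)$ stays within $O(\eps)$ of $h_\phi(\theta_0)+\eps/4$ when $\delta$ is small.

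Two points remain under-justified. First, the whole construction presumes $h_\phi$ is real-valued: under the bare hypothesis $\rho=1$ the indicator can be $+\infty$ (no finite-type assumption is made), and then neither the sinusoid $C$ nor the continuity argument exists; this is really a looseness of the statement as transcribed in the paper (Levin assumes finite type), and it is harmless in the paper's applications, where $ET_\Delta(\phi)<\infty$ bounds $h_\phi$ above. Second, and more substantively, your endpoint fix does not close the gap it is meant to close. At $\theta=\theta_1$ the issue is not which boundary bound to feed into Phragm\'en--Lindel\"of --- you always use the pointwise definition of $h_\phi$ on the two bounding rays --- but the final comparison $H(\theta)\leq h_\phi(\theta)+\eps$ for $\theta$ slightly inside $[\theta_1,\theta_1+2\delta]$. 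There $H(\theta)\approx h_\phi(\theta_1)+\eps/4$, so you need $\liminf_{\theta\to\theta_1^+}h_\phi(\theta)\geq h_\phi(\theta_1)$, i.e.\ lower semicontinuity of $h_\phi$ at the endpoint; trigonometric convexity only yields upper semicontinuity there, and since $\phi$ is merely holomorphic in a neighborhood of the closed sector (which need not contain any strictly larger sector near infinity), you cannot enlarge the angle to make $\theta_1$ an interior point. This needs a separate argument (or the stronger hypothesis that $\phi$ is holomorphic in a larger sector, which in fact holds every time the paper invokes Theorem \ref{tw5}, since there $\Delta$ is always compactly contained in the angle of holomorphy).
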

\bigskip
\begin{proof}[Proof of Proposition \ref{pro1}]
Let the conditions $(i),(ii)$ of Theorem \ref{tw1} be satisfied. Taking $\Delta=\Delta(\theta,\theta)$, $|\theta|<\pi/2$, we get $$h_\phi(\theta)\leq ET_\Delta(\phi)\leq IET_{\inte H}(\phi)=0,$$ so by Theorem \ref{tw2}, the function $f$ extends holomorphically on $\cd$.

Now, let $f$ extend holomorphically on $\cd$. Let $\Delta\su\inte H\cup\{0\}$ be a closed sector and let $\eps>0$. The condition $(ii)$ of Theorem \ref{tw2} implies $ET_\Delta(\phi)<\infty$, so any number $c>1$ fulfills $|\phi(z)|\leq e^{|z|^c}$ for sufficiently big $z\in\Delta$. Hence the number $\rho$ defined by \eqref{13} is not greater than $1$. 

If $\rho<1$ then for any $c\in(\rho,1)$ we have $|\phi(z)|\leq e^{|z|^c}$ if $z\in\Delta$ is sufficiently big, so $$\limsup_{z\in\Delta,\ z\to\infty}\frac{\log|\phi(z)|}{|z|}\leq\limsup_{z\in\Delta,\ z\to\infty}\frac{|z|^c}{|z|}=0.$$ Hence $ET_\Delta(\phi)=0$ and $IET_{\inte H}(\phi)=0$, as claimed.

If $\rho=1$ then in virtue of the condition $(iii)$ of Theorem \ref{tw2}, it follows from Theorem \ref{tw5} that $$\frac{\log|\phi(z)|}{|z|}\leq\eps,\quad z\in\Delta,\ |z|\geq r_\eps.$$ Thus $$\limsup_{z\in\Delta,\ z\to\infty}\frac{\log|\phi(z)|}{|z|}\leq 0,$$ so $ET_\Delta(\phi)=0$ and $IET_{\inte H}(\phi)=0$.
\end{proof}
Therefore, all we need is to prove Theorem \ref{tw3}.
\bigskip\bigskip
\section{Proof of Theorem 1.3 (sufficiency)}
Divide the given series into two parts $$\sum_{n=0}^\infty a_nz^n=\sum_{n=0}^m a_nz^n+\sum_{n=m+1}^\infty\phi(n)z^n,\quad m\geq n_0.$$ 

Let $\Arg:\CC\sm\{0\}\lon[0,2\pi)$ be a branch of the argument, continuous in $\co$, and let $$z^\zeta:=e^{\zeta(\log|z|+i\Arg z)},\quad z\neq 0,\ \zeta\in\CC.$$

We claim that for any compact set $K\su\co$, $\inte K\neq\emptyset$, there is $m\in\NN$ and a contour $\Gamma_m\su D$ such that \begin{equation}\label{2}\sum_{n=m+1}^\infty\phi(n)z^n=\int_{\Gamma_m}\underbrace{\frac{\phi(\zeta)z^\zeta}{e^{2\pi i\zeta}-1}}_{=:g(\zeta,z)}d\zeta=:I_m(z),\quad z\in\DD\cap K\end{equation} and $I_m(z)$ converge uniformly in $K$ to a function, which restricted to $\inte K$ is holomorphic.

We define $$a:=\min_{z\in K}(\pi-|\Arg z-\pi|)>0,\quad b:=\max_{z\in K}\log|z|.$$ Let $\theta\in(0,\pi/2)$ satisfy $b\cot\theta-a<0$ (the number $b\cot\theta-a$ will appear in some inequality). Put $\Delta:=\Delta(-\theta,\theta)\su\inte H\cup\{0\}$. 

In what follows, numbers $m$ are integers. Let $$G_m:=\Delta\sm\DD(0,m+1/2),\quad\Gamma_m:=\pa G_m,\quad\gamma_m:=\Gamma_m\cap\pa\DD(0,m+1/2),\quad m>0,$$ where $\DD(a,r)$ is an open disc with a radius $r>0$, centered at a point $a\in\CC$, and $\Gamma_m$ is positively oriented w.r.t. $G_m$.

For sufficiently big $m\in\NN$, namely for $m\geq m_0$, the domain $D$ contains $G_{m}$ (as $D$ is asymptotic to $H$). Let \begin{equation}\label{21}\eps(\zeta):=\frac{\log|\phi(\zeta)|}{|\zeta|},\quad\zeta\in G_{m_0}.\end{equation} The conditions $(ii),(iii)$ of Theorem \ref{tw3} combined with Theorem \ref{tw5} give (cf. the proof of Proposition \ref{pro1}) $$\limsup_{\zeta\to\infty}\eps(\zeta)\leq 0.$$ 

Note that \begin{multline}\label{20}|z^\zeta|=\exp(\re(\zeta\log|z|+i\zeta\Arg z))=\exp(\log|z|\re\zeta-\Arg z\im\zeta))\\\leq\exp(\log|z|\re\zeta+|\Arg z-\pi||\im\zeta|-\pi\im\zeta),\quad\zeta\in G_{m_0},\ z\in\co.\end{multline}

There is $r=r(\Delta)\in(0,1/2)$ such that $\DD(m,r)\su G_{m_0}$ for $m>m_0$. Furthermore, \begin{equation}\label{1}\frac{1}{|e^{2\pi i\zeta}-1|}\leq c\exp(\pi\im\zeta-\pi|\im\zeta|),\quad\zeta\in\CC\sm\bigcup_{m\in\ZZ}\DD(m,r).\end{equation}

Indeed, if $\im\zeta<-1$ then $$|e^{2\pi i\zeta}-1|\geq|e^{2\pi i\zeta}|-1=e^{-2\pi\im\zeta}-1>0$$ and $$\pi\im\zeta-\pi|\im\zeta|=2\pi\im\zeta,$$ so $$\frac{1}{|e^{2\pi i\zeta}-1|}\leq\frac{1}{e^{-2\pi\im\zeta}-1}=\frac{e^{2\pi\im\zeta}}{1-e^{2\pi\im\zeta}}\leq
\frac{1}{1-e^{-2\pi}}e^{2\pi\im\zeta}.$$

For $\im\zeta>1$ we estimate $$|e^{2\pi i\zeta}-1|\geq 1-|e^{2\pi i\zeta}|=1-e^{-2\pi\im\zeta}>0$$ and $$\pi\im\zeta-\pi|\im\zeta|=0,$$ so $$\frac{1}{|e^{2\pi i\zeta}-1|}\leq\frac{1}{1-e^{-2\pi\im\zeta}}\leq
\frac{1}{1-e^{-2\pi}}.$$ Concluding, the inequality \eqref{1} holds with a constant $(1-e^{-2\pi})^{-1}$ on the set $\{\zeta\in~\CC:|\im\zeta|>1\}$.

It remains to consider the case $|\im\zeta|\leq 1$. Note that both sides of \eqref{1} do not change after adding to $\zeta$ any integer. Hence one may assume without loss of generality that $|\re\zeta|\leq 1$. The set $$Q:=\{\zeta\in\CC:|\re\zeta|\leq 1,\ |\im\zeta|\leq 1\}\sm\bigcup_{m\in\ZZ}\DD(m,r)$$ is compact and the function $$q:Q\ni\zeta\longmapsto|e^{2\pi i\zeta}-1|\exp(\pi\im\zeta-\pi|\im\zeta|)\in[0,\infty)$$ has no zeroes. Therefore $c':=\inf_Qq>0$, so the inequality \eqref{1} is satisfied on $Q$ with a constant $1/c'$.

Finally, \eqref{1} holds on the set $\CC\sm\bigcup_{m\in\ZZ}\DD(m,r)$ with $$c:=\max\left\{\frac{1}{1-e^{-2\pi}},\ \frac{1}{c'}\right\}.$$

Multiplying \eqref{21}, \eqref{20} and \eqref{1} by sides, we obtain for $\zeta\in G_{m_0}\sm\bigcup_{m>m_0}\DD(m,r)$ and $z\in\co$ \begin{equation}|\label{11}g(\zeta,z)|\leq c\exp(\log|z|\re\zeta-(\pi-|\Arg z-\pi|)|\im\zeta|+\eps(\zeta)|\zeta|).\end{equation} This implies for $\zeta\in\Gamma_{m}\sm\gamma_{m}$, $m>m_0$ and $z\in K$ \begin{eqnarray*}|g(\zeta,z)|&\leq&c\exp(b|\zeta|\cos\theta-a|\zeta|\sin\theta+\eps(\zeta)|\zeta|)\\&=&
c\exp([(b\cot\theta-a)\sin\theta+\eps(\zeta)]|\zeta|).
\end{eqnarray*} As $(b\cot\theta-a)\sin\theta$ is a negative constant, say $c'$, and $\limsup_{\zeta\to\infty}\eps(\zeta)\leq 0$, we may estimate for big $m$ $$\left|\int_{\Gamma_m\sm\gamma_m}g(\zeta,z)d\zeta\right|\leq\int_{\Gamma_m\sm\gamma_m}c\exp\left(\frac{c'}{2}|\zeta|\right)|d\zeta|
\leq\int_{0}^\infty c\exp\left(\frac{c'}{2}\rho\right)d\rho<\infty.$$ Hence the integrals $I_m(z)$ converge uniformly in $K$, consequently they are holomorphic in $\inte K$.

For $z\in\DD\cap K$, $\zeta\in\gamma_m$ and $m>m_0$, it follows from \eqref{11} that \begin{eqnarray*}|g(\zeta,z)|&\leq& c\exp(\log|z||\zeta|\cos\theta+\eps(\zeta)|\zeta|)\\&=&
c\exp([\log|z|\cos\theta+\eps(\zeta)]|\zeta|).\end{eqnarray*} Since $c'':=\log|z|\cos\theta$ is a negative constant, we have for $z\in\DD\cap K$ \begin{multline*}\left|\int_{\gamma_m}g(\zeta,z)d\zeta\right|\leq\int_{\gamma_m}c\exp\left(\frac{c''}{2}|\zeta|\right)|d\zeta|\\
\\\leq 2\pi c\exp\left(\frac{c''}{2}\left(m+\frac{1}{2}\right)\right)\left(m+\frac{1}{2}\right)\to 0,\quad m\to\infty.\end{multline*}

By the Residue Theorem, applied to the contours $\pa(G_m\sm G_n)$, $n>m\geq m_0+1$, we get passing with $n$ to infinity $$\int_{\Gamma_m}g(\zeta,z)d\zeta=2\pi i\sum_{n=m+1}^\infty\res(g(\cdotp,z),n),\quad z\in\DD\cap K,$$ where $\res(F,x)$ is a residue of a function $F$ at a point $x$. Clearly, $$\res(g(\cdotp,z),n)=\lim_{\zeta\to n}(\zeta-n)g(\zeta,z)=\frac{\phi(n)z^n}{2\pi i},\quad z\in\DD\cap K,$$ whence \eqref{2} is proved.

To finish the proof it suffices to use the Monodromy Theorem for the simply connected domain $\co$. More precisely, we fix any compact curve $\gamma\su\co$ such that $\gamma\cap\DD\neq\emptyset$. Let $K\su\co$ be a compact set such that $\gamma\su\inte K$. Therefore, $$\sum_{n=0}^\infty a_nz^n=\sum_{n=0}^m a_nz^n+I_m(z)\ \ \text{for\, }z\in\DD\cap\gamma\ \text{and big } m\in\NN,$$ but the right side extends holomorphically on $\inte K$, being an open neighborhood of $\gamma$.\qed
\bigskip\bigskip
\section{Proof of Theorem 1.3 (necessity)}
In the whole proof the following notation is valid. Let $$\gamma_{r,\theta}(t):=te^{i\theta},\quad t\in[r,\infty),$$ for parameters $r\geq 0$ and $\theta\in[-\pi,\pi)$. Denote by $\gamma_{r,\theta}^-$ the reversely oriented $\gamma_{r,\theta}$. We identify these curves with their images, that is half-lines. We assume that any circle $r\TT$, where $r>0$, is positively oriented w.r.t. $r\DD$. Let $$D_{r,\theta}:=\CC\sm(r\CDD\cup\gamma_{r,\theta})$$ and $$\Log_{\theta}\zeta:=\log|z|+i\Arg_{\theta}\zeta,$$ where $$\Arg_{\theta}:\CC\sm\{0\}\lon[\theta,\theta+2\pi)$$ is a branch of the argument, continuous in $D_{0,\theta}$. In particular, $\Arg_\theta e^{i\theta}=\theta$. We fix an arbitrary $\eta\in(0,1/2)$.
\begin{lem}\label{9}
Let a function $g(z)=\sum_{n=0}^\infty a_nz^n$ be holomorphic in $\DD$ and let it extend holomorphically on $\cd$. Assume that $\theta\in[-\pi,\pi)\sm\{0\}$ and $$|g(\zeta)|<Me^{-|\zeta|^\eta},\quad\zeta\in\gamma_{0,\theta}$$ with some constant $M$. Then for $r\in(0,1)$ the formula \begin{equation}\label{14}\phi_{r,\theta}(z):=\frac{1}{2\pi i}\int_{\gamma_{r,\theta}^-\cup r\TT}e^{(-z-1)\Log_{\theta}\zeta}g(\zeta)d\zeta+\frac{1}{2\pi i}\int_{\gamma_{r,\theta}}e^{(-z-1)(\Log_{\theta}\zeta+2\pi i)}g(\zeta)d\zeta\end{equation} defines an entire function such that 
\begin{itemize}
\item $\phi_{r,\theta}$ does not depend on $r\in(0,1)$ and we denote $\phi_{\theta}:=\phi_{r,\theta};$
\item $\phi_{\theta}(n)=a_n,\ n\geq 0;$
\item $\phi_{\theta}(n)=0,\ n=-1,-2,\ldots;$
\item $ET_{H}(\phi_{\theta})<\infty;$
\item $\ord\phi_{\theta}\leq 1$.\\
\end{itemize}

If additionally $g$ is entire then the formula \eqref{14} defines for $r>0$ {$($}$\theta$ can be equal to 0$)$ an entire function such that 
\begin{itemize}
\item $\phi_{r,\theta}$ does not depend on $r>0;$ 
\item $h_{\phi_{\theta}}(\wi\theta)=-\infty,\ |\wi\theta|<\pi/2$.\\
\end{itemize}
\end{lem}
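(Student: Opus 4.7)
The plan is to verify the assertions in the order listed, relying on the identity
\[
\bigl|e^{(-z-1)\Log_{\theta}\zeta}\bigr|=|\zeta|^{-1-\re z}\,e^{\im z\cdot\Arg_{\theta}\zeta},
\]
which together with $|g(\zeta)|\leq Me^{-|\zeta|^\eta}$ on the ray shows that the ray integrands are dominated, uniformly in $z$ on compact subsets of $\CC$, by expressions of the form $t^{C}e^{-t^\eta}$; hence the integrals in \eqref{14} converge absolutely and define an entire function of $z$. Independence on $r\in(0,1)$ will follow from Cauchy's theorem applied to the annular keyhole region $\{r_1\leq|\zeta|\leq r_2\}\sm\gamma_{0,\theta}\su\DD\su\cd$, where the integrand is single-valued and holomorphic. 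The interpolation properties will arise by substituting $z=n\in\ZZ$: since $e^{-2\pi i(n+1)}=1$, the two ray integrals exactly cancel, and what remains is $\frac{1}{2\pi i}\int_{r\TT}\zeta^{-n-1}g(\zeta)d\zeta$, which equals $a_n$ for $n\geq 0$ by the Cauchy coefficient formula and vanishes for $n\leq-1$ by holomorphy of the integrand inside $r\DD$.

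For the growth bounds I would estimate each arc separately. On $r\TT$ the integrand is at most $r^{-1-\re z}e^{3\pi|\im z|}\max_{r\TT}|g|$, giving a contribution of size $e^{O(|z|)}$ after integration. On the two ray-arcs, the bound $|g(\zeta)|\leq Me^{-|\zeta|^\eta}$ reduces matters to controlling $\int_{r}^{\infty}t^{-1-\re z}e^{-t^\eta}dt$. For $\re z\geq 0$ this is $O(r^{-\re z})$, so $\log|\phi_{\theta}(z)|\leq C|z|$ on $H$ and $ET_H(\phi_{\theta})<\infty$. For $z\in\CC$ with $\re z$ very negative, the substitution $u=t^\eta$ turns the integral into a Gamma-type expression, and Stirling gives a bound $\exp\bigl(\tfrac{|\re z|}{\eta}\log|\re z|+O(|z|)\bigr)$; hence $\log|\phi_{\theta}(z)|\lesssim|z|\log|z|$, which yields $\ord\phi_{\theta}\leq 1$.

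For the entire-$g$ case, the integrand is holomorphic on $\CC\sm\gamma_{0,\theta}$, so the same homotopy argument gives independence on every $r>0$. The substantive task is to prove $h_{\phi_{\theta}}(\wi\theta)=-\infty$ for $|\wi\theta|<\pi/2$. Writing $z=Re^{i\wi\theta}$ with $\re z=R\cos\wi\theta>0$, I would exploit the $r$-independence to take $r=r(R)\to\infty$ (for instance $r=R$ or $r=R^{1/\eta}$): the ray factor $|\zeta|^{-1-\re z}\leq r^{-1-R\cos\wi\theta}$ then contributes decay $e^{-R\cos\wi\theta\cdot\log r}$, while $e^{-t^\eta}$ adds further super-exponential smallness, so the ray part satisfies $\log(\mathrm{ray})/R\to-\infty$. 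The hard part will be the circle contribution, which depends on $\max_{r(R)\TT}|g|$: although only decay along $\gamma_{0,\theta}$ is hypothesised, entire-ness of $g$ combined with this ray decay should, via a Phragm\'en--Lindel\"of-type argument on sectors around the ray, furnish a growth bound on $g$ compatible with the chosen $r(R)$, rendering the circle contribution super-exponentially small after division by $R$. This balancing of the ray decay against the circle growth is the step I expect to be the most delicate.
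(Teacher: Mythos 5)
Your treatment of the first five bullet points follows essentially the paper's route: absolute local-uniform convergence of the ray integrals from the identity $|e^{(-z-1)\Log_\theta\zeta}|=|\zeta|^{-1-\re z}e^{\im z\,\Arg_\theta\zeta}$ and the decay $Me^{-|\zeta|^\eta}$ on the ray; $r$-independence by Cauchy's theorem on a keyhole annulus; interpolation from the vanishing of $e^{-2\pi iz}-1$ at integers plus the Cauchy coefficient formula on $r\TT$; the bound $ET_H(\phi_\theta)<\infty$ from $\varrho^{-\re z-1}\leq\varrho^{-1}$ for $\varrho\geq1$, $\re z\geq0$; and $\ord\phi_\theta\leq1$ from a Gamma/Stirling estimate of $\int_1^\infty\varrho^{|z|-1}e^{-\varrho^\eta}\,d\varrho$ giving $\log|\phi_\theta(z)|=O(|z|\log|z|)$. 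All of this is sound and agrees with the paper.

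The gap is in the final step, which you yourself flag as the delicate one. Coupling $r=r(R)\to\infty$ with $R$ forces you to control $\max_{r(R)\TT}|g|$, and no such control is available: the only hypothesis on $g$ away from the ray is that it is entire, and an entire function decaying like $e^{-|\zeta|^\eta}$ on a single ray can grow arbitrarily fast off that ray. A Phragm\'en--Lindel\"of argument cannot supply the missing bound, because it needs an a priori growth restriction (finite order and type in the relevant sectors) that is not part of the hypotheses; so as written this step would fail. The paper sidesteps the issue by decoupling the two limits: for each \emph{fixed} $r>0$ one has $2\pi|\phi_\theta(z)|\leq C_{\{r\}}r^{-\re z}e^{3\pi|z|}$ with $C_{\{r\}}$ (which absorbs $\max_{r\TT}|g|$) independent of $z$, so in $h_{\phi_\theta}(\wi\theta)=\limsup_{R\to\infty}\log|\phi_\theta(Re^{i\wi\theta})|/R$ the constant disappears and one gets $h_{\phi_\theta}(\wi\theta)\leq-\cos\wi\theta\log r+3\pi$; since $\phi_\theta$ does not depend on $r$, letting $r\to\infty$ \emph{afterwards} yields $h_{\phi_\theta}(\wi\theta)=-\infty$. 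Replacing your coupled choice $r=r(R)$ by this two-step limit closes the gap without any growth information on $g$ off the ray.
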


\begin{proof}
Let $$e^{(-z-1)\Log_{\theta}\zeta}g(\zeta)=:\alpha_{z,\theta}(\zeta),\quad e^{(-z-1)(\Log_{\theta}\zeta+2\pi i)}g(\zeta)=:\beta_{z,\theta}(\zeta).$$

First, we will show that the integrals are locally uniformly convergent. This fact is obvious for the integral over the circle $r\TT$. Since $$\int_{\gamma_{r,\theta}}\beta_{z,\theta}=e^{-2\pi iz}\int_{\gamma_{r,\theta}}\alpha_{z,\theta},$$ it suffices to show that the integrals $\int_{\gamma_{r,\theta}}\alpha_{z,\theta}$ are locally uniformly convergent. To do this, we estimate for $z\in\CC$ and $r\in(0,1)$ \begin{eqnarray*}\int_{\gamma_{r,\theta}}|\alpha_{z,\theta}(\zeta)||d\zeta|&=&\int_r^\infty|
e^{(-z-1)(\log\varrho+i\theta)}g(\varrho e^{i\theta})e^{i\theta}|d\varrho\\&\leq&\int_r^\infty\varrho^{-\re z-1}e^{\theta\im z}Me^{-\varrho^\eta}d\varrho\\&\leq& Me^{\theta\im z}\left(\int_r^1\varrho^{-\re z-1}e^{-\varrho^\eta}d\varrho+\int_1^\infty\varrho^{-\re z-1}e^{-\varrho^\eta}d\varrho\right)\\&\leq&
Me^{\theta\im z}\left(\int_r^1\varrho^{-|z|}\varrho^{-1}d\varrho+
\int_1^\infty\varrho^{|z|-1}e^{-\varrho^\eta}d\varrho\right)\\&\leq& Me^{\theta\im z}\left(\int_r^1r^{-|z|}\varrho^{-1}d\varrho+
\frac{1}{\eta}\int_1^\infty\rho^{\frac{|z|}{\eta}-1}e^{-\rho}d\rho\right)\\&=&Me^{\theta\im z}\left(r^{-|z|}\log(1/r)+\frac{1}{\eta}\int_1^\infty\rho^{\frac{|z|}{\eta}-1}e^{-\rho}d\rho\right).\end{eqnarray*} Now, if $|z|<\eta$ then the last integral is majorized by $$\frac{1}{\eta}\int_1^\infty e^{-\rho}d\rho=\frac{1}{\eta e}.$$ Otherwise, if $|z|\geq\eta$ we use the gamma function $$\Gamma(\zeta):=\int_0^\infty\rho^{\zeta-1}e^{-\rho}d\rho,\quad\re\zeta>0,$$ to get $$\frac{1}{\eta}\int_1^\infty\rho^{\frac{|z|}{\eta}-1}e^{-\rho}d\rho\leq\frac{1}{\eta}\Gamma\left(\frac{|z|}{\eta}\right).$$ From the Stirling's formula there exist constants $c',C'>0$ such that $$\Gamma\left(\frac{|z|}{\eta}\right)\leq c'\exp(c'|z|\log|z|),\quad|z|>C'.$$ In the remaining case $\eta\leq|z|\leq C'$ we estimate $\frac{1}{\eta}\Gamma\left(\frac{|z|}{\eta}\right)$ just by a constant.

The above three cases can be put together by one inequality (with a constant $c$) $$\frac{1}{\eta}\int_1^\infty\rho^{\frac{|z|}{\eta}-1}e^{-\rho}d\rho\leq c\exp(c|z|\log(|z|+1)),\quad z\in\CC.$$ For $z\in\CC$ and $r\in(0,1)$ it follows that \begin{equation}\label{18}\left|\int_{\gamma_{r,\theta}}\alpha_{z,\theta}\right|\leq Me^{\theta\im z}\left(r^{-|z|}\log(1/r)+c\exp(c|z|\log(|z|+1))\right).
\end{equation}

Concluding, $\phi_{r,\theta}$ are entire functions. To show that for any $z\in\CC$, the number $\phi_{r,\theta}(z)$ does not depend on $r$, take $0<r<R<1$ and consider the difference $$2\pi i(\phi_{R,\theta}(z)-\phi_{r,\theta}(z))=\int_{R\TT}\alpha_{z,\theta}+\int_{R e^{i\theta}}^{r e^{i\theta}}\beta_{z,\theta}-\int_{r\TT}\alpha_{z,\theta}+\int_{r e^{i\theta}}^{R e^{i\theta}}\alpha_{z,\theta}.$$ The above sum is a limit of a sequence of integrals of the only one function $\alpha_{z,\theta}$, over closed paths contained in $D_{0,\theta}$ (more precisely, such path is the boundary of an annulus of radiuses $r$ and $R$, from which a figure approximating the interval between points $re^{i\theta}$ and $Re^{i\theta}$ is removed). By the Cauchy Theorem the integral of $\alpha_{z,\theta}$ over any such path vanishes, so the limit vanishes, as well.

As in the assertion we denote the common value of $\phi_{r,\theta}(z)$ by $\phi_\theta(z)$. We have \begin{multline}\label{12}2\pi i\phi_\theta(z)=\int_{r\TT}\alpha_{z,\theta}-\int_{\gamma_{r,\theta}}\alpha_{z,\theta}+\int_{\gamma_{r,\theta}}e^{-2\pi iz}\alpha_{z,\theta}\\=\int_{r\TT}e^{(-z-1)\Log_{\theta}\zeta}g(\zeta)d\zeta+(e^{-2\pi iz}-1)\int_{\gamma_{r,\theta}}e^{(-z-1)\Log_{\theta}\zeta}g(\zeta)d\zeta,\quad z\in\CC,\ r\in(0,1),\end{multline} so for $z$ equal to the integer $n$, the number $\phi_\theta(n)$ is the $n$th coefficient of the Laurent series of $g$, that is $\phi_\theta(n)=a_n$ for $n\geq 0$ and $\phi_\theta(n)=0$ for $n<0$.

To obtain estimates of the order, the exponential type and the indicator function we also need some inequalities for the integral over $r\TT$, namely \begin{multline}\label{15}\left|\int_{r\TT}\alpha_{z,\theta}\right|\leq\int_{\theta}^{\theta+2\pi}|e^{(-z-1)(\log r+it)}g(re^{it})rie^{it}|dt\\
\leq\max_{r\TT}|g|\,r^{-\re z}\int_{\theta}^{\theta+2\pi}e^{t\im z}dt\leq 2\pi\max_{r\TT}|g|\,r^{-\re z}e^{3\pi|\im z|},\quad z\in\CC,\ r\in(0,1).\end{multline} 

Combining \eqref{12} and \eqref{15}, we arrive at 
\begin{multline}\label{23}2\pi|\phi_\theta(z)|\leq\left|\int_{r\TT}\alpha_{z,\theta}\right|+|e^{-2\pi iz}-1|\left|\int_{\gamma_{r,\theta}}\alpha_{z,\theta}\right|\\\leq C_rr^{-\re z}e^{3\pi|\im z|}+2e^{2\pi|\im z|}\left|\int_{\gamma_{r,\theta}}\alpha_{z,\theta}\right|,\quad z\in\CC,\ r\in(0,1),\end{multline} for a constant $C_r$ depending only on $r$. 

From \eqref{23} and \eqref{18} we get \begin{multline}\label{16}|\phi_\theta(z)|\leq c_{(r)}\exp(\re z\log(1/r)+3\pi|\im z|)+\\+c_{(r)}e^{3\pi|\im z|}(e^{|z|\log(1/r)}\log(1/r)+e^{c|z|\log|z|})\leq C_{(r)}\exp(C_{(r)}|z|\log|z|)\end{multline} for big $z\in\CC$ and constants $c_{(r)},C_{(r)}$ depending only on $r$, so $\ord\phi_\theta\leq 1$. 

Note that for $z\in H$, i.e. $\re z\geq 0$, we can estimate, using some of the inequalities obtained at the beginning for $\int_{\gamma_{r,\theta}}|\alpha_{z,\theta}(\zeta)||d\zeta|$ \begin{multline*}\left|\int_{\gamma_{r,\theta}}\alpha_{z,\theta}\right|\leq Me^{\theta\im z}\left(\int_r^1\varrho^{-\re z-1}e^{-\varrho^\eta}d\varrho+\int_1^\infty\varrho^{-\re z-1}e^{-\varrho^\eta}d\varrho\right)\\\leq Me^{\theta\im z}\left(r^{-|z|}\log(1/r)+\int_1^\infty e^{-\varrho^\eta}d\varrho\right),\quad\ r\in(0,1),\end{multline*} but $C:=\int_1^\infty e^{-\varrho^\eta}d\varrho$ is a constant. Therefore, we obtain (changing $e^{c|z|\log|z|}$ to $C$ in \eqref{16}) $$|\phi_\theta(z)|\leq c_{[r]}e^{C_{[r]}|z|},\quad z\in H,\,z\to\infty,$$ with constants $c_{[r]},C_{[r]}$ depending only on $r$, which shows that $ET_H(\phi_\theta)\leq C_{[r]}$ (in fact, $C_{[r]}\to 3\pi$ as $r\to 1$, so $ET_H(\phi_\theta)\leq 3\pi$).

\bigskip\bigskip
Assume that $g$ is entire. The local uniform convergence of the integrals follows from $$\int_{\gamma_{r,\theta}}|\alpha_{z,\theta}(\zeta)||d\zeta|\leq\int_{\gamma_{1/2,\theta}}|\alpha_{z,\theta}(\zeta)||d\zeta|.$$ The proof that $\phi_{r,\theta}$ does not depend on $r>0$ goes in the same way as before. We have to verify only the equality for the indicator function.

Fix $\wi\theta\in(-\pi/2,\pi/2)$ and let $z=Re^{i\wi\theta}$, $R>0$. Since $\re z>0$, we may proceed as follows for $r>0$ \begin{eqnarray*}\left|\int_{\gamma_{r,\theta}}\alpha_{z,\theta}\right|&\leq&
\int_r^\infty|e^{(-z-1)(\log\varrho+i\theta)}g(\varrho e^{i\theta})e^{i\theta}|d\varrho\\&\leq&Me^{\theta\im z}\int_r^\infty\varrho^{-\re z-1}e^{-\varrho^\eta}d\varrho\\&\leq&Me^{\theta\im z}\int_r^\infty r^{-\re z-1}e^{-\varrho^\eta}d\varrho\\&\leq&M'e^{\theta\im z}r^{-\re z-1}\end{eqnarray*} with a constant $M':=M\int_0^\infty e^{-\varrho^\eta}d\varrho$. It is clear that \eqref{12} and \eqref{15} hold now for $r>0$, whence \begin{multline*}2\pi|\phi_\theta(z)|\leq\left|\int_{r\TT}\alpha_{z,\theta}\right|+|e^{-2\pi iz}-1|\left|\int_{\gamma_{r,\theta}}\alpha_{z,\theta}\right|\\\leq C_rr^{-\re z}e^{3\pi|\im z|}+2e^{2\pi|\im z|}M'e^{\theta\im z}r^{-\re z-1}\leq C_{\{r\}}r^{-\re z}e^{3\pi|z|},\quad r>0,\end{multline*} for constants $C_r,C_{\{r\}}$ depending only on $r$. This leads to $$|\phi_\theta(Re^{i\wi\theta})|\leq\frac{1}{2\pi}C_{\{r\}}r^{-R\cos\wi\theta}e^{3\pi R}=\frac{1}{2\pi}C_{\{r\}}\exp(-R\cos\wi\theta\log r+3\pi R)$$ and $$h_{\phi_\theta}(\wi\theta)\leq-\cos\wi\theta\log r+3\pi.$$ Passing with $r$ to infinity we arrive at $h_{\phi_\theta}(\wi\theta)=-\infty$.
\end{proof}
\bigskip\bigskip
Let $\dist$ denote the Euclidean distance. We have the following theorems.
\begin{tw}[Keldysh, \cite{Mer}, Chapter 2, \S 3, Theorem 1.3]\label{tw6}
Let $$E:=\{z\in\CC:\dist(z,[0,\infty))\geq 1\}$$ and let $f\in\OO(E)$. Then for any $\eps>0$ and $\eta\in(0,1/2)$ there exists an entire function $g$ such that \begin{equation}\label{25}
|f(z)-g(z)|\leq\eps e^{-|z|^\eta},\quad z\in E.\end{equation}
\end{tw}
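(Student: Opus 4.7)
The plan is to derive Keldysh's theorem by combining Arakelyan's classical uniform holomorphic-approximation theorem on closed subsets of $\CC$ with a Mittag--Leffler-type telescoping construction that upgrades uniform $\delta$-approximation to the prescribed decay $\eps e^{-|z|^\eta}$.

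First I would verify that $E$ is an \emph{Arakelyan set}, that is, that $(\CC\cup\{\infty\})\sm E$ is connected and locally connected at $\infty$. The complement of $E$ in $\CC$ is the open tube $\{z:\dist(z,[0,\infty))<1\}$; adjoining $\infty$ yields a connected set whose intersection with any neighbourhood of $\infty$ is a half-tube, so both topological conditions are immediate. Arakelyan's theorem then produces, for every $f\in\OO(E)$ and every $\delta>0$, an entire function $g$ with $\sup_E|f-g|<\delta$.

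To promote uniform $\delta$-approximation to the decay $\eps e^{-|z|^\eta}$, I would exhaust $E$ by the closed shells $K_n:=E\cap\{|z|\leq 2^n\}$ and inductively produce entire functions $g_n$ with $|f-g_n|<\delta_n$ on $K_n$, where the tolerances $\delta_n:=\eps\,2^{-n-1}\exp(-2^{n\eta})$ decay strictly faster than the target rate evaluated on the outer boundary of $K_n$. The candidate entire approximant is the telescope $g:=g_1+\sum_{n\geq 1}(g_{n+1}-g_n)$; at a point $z\in E$ with $|z|\asymp 2^m$ the partial bound $|f-g_m|\leq\delta_m\lesssim\eps e^{-|z|^\eta}$ is already what is wanted, provided the tail $\sum_{n\geq m}(g_{n+1}-g_n)(z)$ contributes negligibly and the whole series converges locally uniformly on $\CC$.

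The main obstacle lies precisely in this tail control: each difference $u_n:=g_{n+1}-g_n$ is small only on $K_n$, and being entire it may be arbitrarily large at points of $E$ outside $K_n$, so independent invocations of Arakelyan do not suffice. The standard remedy, which is the technical heart of the proof in Mergelyan's book, is to construct the $g_n$ inductively and to build each increment $g_{n+1}-g_n$ as a Runge-type correction whose auxiliary poles can be pushed along $[0,\infty)\subset\CC\sm E$; the pole positions and residues are chosen so that each correction contributes mass exponentially small in the distance from $E$ to its singularities. Once this quantitative increment construction is set up, the telescope converges locally uniformly to an entire function satisfying the estimate \eqref{25}.
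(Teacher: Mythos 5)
First, a remark on the comparison you were asked for: the paper contains no proof of this statement at all --- it is imported verbatim as a classical theorem of Keldysh from Mergelyan's survey \cite{Mer} and used as a black box in the necessity part of Theorem \ref{tw3}. So there is no internal argument to measure yours against, and I can only judge the sketch on its own terms. Your opening step is correct: $E$ is an Arakelyan set, since $(\CC\cup\{\infty\})\sm E$ is the open tube $\{z:\dist(z,[0,\infty))<1\}$ together with $\infty$, which is connected and locally connected at $\infty$; hence uniform approximation $\sup_E|f-g|<\delta$ is available for every $\delta>0$. Your bookkeeping with the shells $K_n$ and the tolerances $\delta_n=\eps 2^{-n-1}\exp(-2^{n\eta})$ would also correctly convert a convergent telescope into the bound \eqref{25}.

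The gap is the one you flag yourself and then defer to ``the standard remedy'': nothing in your construction makes the series $\sum_n(g_{n+1}-g_n)$ converge on $\CC\sm E$, i.e.\ inside the tube, and this is not a technicality but the entire content of the theorem. Observe that if the increments could always be controlled off $E$ with the tolerances you prescribe, your argument would establish tangential approximation on $E$ with an \emph{arbitrary} error function --- in particular for every $\eta>0$, and indeed for errors like $e^{-e^{|z|}}$. But $E$ is not a Carleman set: its complement is a channel of bounded width around $[0,\infty)$, and the achievable decay rate for entire approximation on such a set is limited to roughly $e^{-c\sqrt{|z|}}$, which is exactly why the theorem is stated with $\eta\in(0,1/2)$. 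The exponent $1/2$ enters through the geometry of the slit plane (the conformal map $z\mapsto\sqrt{z}$, under which the tube around the ray becomes a region pinching onto the real axis like $1/|w|$), and a correct proof must contain a quantitative lemma of the type: for $\zeta$ in the tube there is an entire function $P_\zeta$ with $|1/(\zeta-z)-P_\zeta(z)|\leq Ce^{-c|\zeta|^{1/2}}$ for $z\in E$, or an equivalent pole-pushing estimate in which moving a pole one unit along the ray costs an error of size $e^{-c\sqrt{n}}$ rather than a $\delta_n$ of your choosing. Your sketch names this step but supplies neither the estimate nor any place where the hypothesis $\eta<1/2$ is used --- a reliable sign that the scheme as structured cannot close. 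As it stands, the proposal reduces the theorem to its hardest part rather than proving it.
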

\bigskip
\begin{tw}[Alexander, \cite{Ale}]\label{tw4} Let $\gamma\su\CC^n$ be a $\cC^1$-smooth proper imbedding of $\RR$ $($that is an injective $\cC^1$-smooth curve beginning and ending at infinity$)$ and let functions $h:\gamma\lon\CC$,\, $\eps:\gamma\lon(0,\infty)$ be continuous. Then there exists a holomorphic function $g:\CC^n\lon\CC$ such that $$|g-h|<\eps\text{ on }\gamma.$$
\end{tw}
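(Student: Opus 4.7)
My plan is to combine the polynomial-convexity properties of $\cC^1$ arcs in $\CC^n$ with a Runge-type telescoping construction. Writing $\gamma:\RR\to\CC^n$ for the given proper $\cC^1$ imbedding, I will exhaust $\gamma$ by compact $\cC^1$ subarcs $K_1\su K_2\su\cdots$ with $\bigcup K_N=\gamma$, and interleave them with polynomially convex compacta $L_N\su\CC^n$ satisfying $K_N\su L_N\su\inte L_{N+1}$ and $\bigcup L_N=\CC^n$. The central analytic ingredient I shall invoke is the classical theorem --- due to Wermer for $n=2$ and to Stolzenberg and Alexander in general --- that every compact $\cC^1$-smooth arc in $\CC^n$ is polynomially convex. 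Combined with the Mergelyan-type density for polynomially convex compacta of empty interior this gives $P(K)=C(K)$ for every compact $\cC^1$ arc $K\su\CC^n$; combined with Oka--Weil on the thick compacta $L_N$ it gives the analogous density on $L_N\cup K_{N+1}$ whenever the latter remains polynomially convex.

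With this in hand, choose tolerances $\eps_N>0$ with $\sum_{N\geq M}\eps_N<\frac12\inf_{K_M}\eps$ for every $M$, and inductively build polynomials $p_N\in\CC[z_1,\ldots,z_n]$ such that the partial sums $g_N:=p_0+\cdots+p_N$ satisfy
$$|g_N-h|<\eps_N\text{ on }K_N\qquad\text{and}\qquad|g_N-g_{N-1}|<\eps_{N-1}\text{ on }L_{N-1}.$$
The inductive step reduces to approximating by a polynomial a continuous function on $L_N\cup K_{N+1}$ which equals $g_N$ on $L_N$ (automatically in $\OO(\inte L_N)$, being a polynomial) and is close to $h-g_N$ plus $g_N$ on $K_{N+1}$. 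The resulting series $g:=\sum p_N$ converges uniformly on every $L_M$, hence defines an entire function on $\CC^n$; the telescoping estimate $|g-h|\leq|g_N-h|+\sum_{k>N}|p_k|$ on $K_M$ then gives $|g-h|<\eps$ on $\gamma$, as required.

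The main obstacle is the polynomial-convexity input: the basic fact that a single compact $\cC^1$-smooth arc in $\CC^n$ is polynomially convex is a deep theorem whose proof uses uniform-algebra machinery (maximum modulus on fibres, Bishop's antisymmetric decomposition, Browder's theorem), and is the genuinely hard part of this result. A separate, more technical point is the construction of the thickenings $L_N$ so that $L_N\cup K_{N+1}$ remains polynomially convex at every stage --- this requires carefully interlacing the exhaustion with the geometry of $\gamma$, e.g.\ by taking $L_N$ to be polydiscs whose boundaries meet $\gamma$ transversally so that $K_{N+1}\sm\inte L_N$ decomposes into finitely many disjoint compact $\cC^1$ subarcs outside $L_N$, and then appealing to a standard gluing lemma for polynomial convexity of unions of $\cC^1$ arcs with polydiscs. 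Once those two ingredients are in place, the remainder is a routine Arakelyan-type exhaustion-and-Runge argument.
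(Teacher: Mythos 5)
The paper does not prove this statement at all: it is Alexander's Carleman-type theorem, quoted from \cite{Ale} and used as a black box to derive Lemma \ref{4}. So there is no internal proof to compare yours against; I can only judge your sketch on its own terms and against Alexander's published argument, which it in fact parallels closely (exhaustion of the curve by compact subarcs interlaced with thick polynomially convex compacta, polynomial convexity and approximation on $\cC^1$ arcs as the key input, and a Carleman-style telescoping series). In that sense the architecture is the right one, and you correctly identify that the genuinely hard content is the Wermer--Stolzenberg--Alexander circle of results on arcs, which you cite rather than prove --- acceptable here, since the statement under discussion is itself a citation.

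Two concrete gaps remain. First, the inference ``$K$ polynomially convex, combined with Mergelyan-type density for polynomially convex compacta of empty interior, gives $P(K)=C(K)$'' is not valid: in $\CC^n$ with $n\geq 2$ there is no such general density principle. The closed unit disc in $\CC\times\{0\}\su\CC^2$ is convex (hence polynomially convex) with empty interior, yet polynomials restrict to the disc algebra, not to all of $C(K)$. The equality $P(K)=C(K)$ for compact $\cC^1$ arcs is a theorem in its own right, proved jointly with polynomial convexity by the uniform-algebra machinery you mention later; it must be invoked as such. Similarly, in the inductive step you need more than polynomial convexity of $L_N\cup K_{N+1}$: you need the separate gluing/localization theorem that every $f\in C(L_N\cup K_{N+1})$ with $f|_{L_N}\in P(L_N)$ lies in $P(L_N\cup K_{N+1})$; Oka--Weil alone does not deliver this for a function that is merely continuous on the arc. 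Second, the telescoping bookkeeping is off by one level: the function approximated at stage $N$ must vanish on $L_N$ and transition to $h-g_N$ along the arc via a cutoff, and that cutoff lives on a collar in $K_{N+1}\sm K_N$, where your induction hypothesis $|g_N-h|<\eps_N$ on $K_N$ gives no control, so the resulting bound on $|u-h|$ there fails. The standard repair is to strengthen the hypothesis so that $|g_N-h|<\eps_N$ already holds on the arc one level beyond $L_N$ (equivalently, perform the stage-$N$ approximation on $L_N\cup K_{N+2}$ with the cutoff supported in $K_{N+1}$). With these two repairs, and granting the deep cited inputs, the outline is sound.
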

The last one allows us to prove
\begin{lem}\label{4} Let $\gamma_1,\gamma_2\su\CC$ be disjoint closed half-lines. Let $\eps$ be a positive continuous function on $\gamma_1\cup\gamma_2$ and let $g\in\OO(\CC)$. Then there exist entire functions $g_1,g_2$ such that $$g=g_1+g_2,$$ $$|g_j|<\eps\text{ on }\gamma_j,\ j=1,2.$$
\end{lem}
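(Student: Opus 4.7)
The plan is to reduce everything to a single application of Alexander's Theorem (Theorem \ref{tw4}), by joining the two disjoint half-lines into one $\cC^1$-smooth proper embedding of $\RR$ and approximating $0$ on $\gamma_1$ and $g$ on $\gamma_2$ simultaneously.

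First I would fix the geometric setup. Write each half-line as $\gamma_j=\{p_j+td_j:t\geq 0\}$, with endpoint $p_j\in\CC$ and unit direction $d_j\in\TT$. Since $\gamma_1$ and $\gamma_2$ are disjoint and closed, I can choose a bounded $\cC^1$-smooth simple arc $\sigma\su\CC$ going from $p_1$ to $p_2$, disjoint from $(\gamma_1\cup\gamma_2)\sm\{p_1,p_2\}$, with tangent vector $-d_j$ at $p_j$ (so that the union is $\cC^1$-smooth at the junction points). Then $\Gamma:=\gamma_1^-\cup\sigma\cup\gamma_2$, parametrized by running along $\gamma_1$ from infinity to $p_1$, along $\sigma$ to $p_2$, and along $\gamma_2$ out to infinity, is a $\cC^1$-smooth proper imbedding of $\RR$ into $\CC$. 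This purely planar construction is the only real geometric content of the proof and is the step where one has to be a little careful; everything else is formal.

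Next I would set up the data for Alexander's Theorem. Define $h:\Gamma\lon\CC$ by $h\equiv 0$ on $\gamma_1$, $h=g|_{\gamma_2}$ on $\gamma_2$, and any continuous interpolation between $0$ and $g(p_2)$ along $\sigma$. Extend $\eps$ to a positive continuous function $\wi\eps$ on $\Gamma$ by any positive continuous interpolation over $\sigma$. Then apply Theorem \ref{tw4} with $n=1$ to $h$ and $\wi\eps$: this yields an entire function $g_1\in\OO(\CC)$ satisfying $|g_1-h|<\wi\eps$ on $\Gamma$.

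Finally I would read off the desired decomposition. On $\gamma_1$ we have $h=0$, so $|g_1|<\eps$ on $\gamma_1$. Set $g_2:=g-g_1$, which is entire, so that $g=g_1+g_2$. On $\gamma_2$ we have $h=g$, so $|g_2|=|g-g_1|<\eps$ on $\gamma_2$. Both required estimates hold, completing the proof. The only potential obstacle is the $\cC^1$-smooth tangent-matching at the junctions $p_1,p_2$, but this is standard and can be realized by, e.g., fitting small circular or cubic-spline arcs near the endpoints and joining them with a generic simple arc in the complement of $\gamma_1\cup\gamma_2$.
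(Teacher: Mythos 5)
Your proposal is correct and follows essentially the same route as the paper: join the two half-lines into a single $\cC^1$-smooth proper imbedding of $\RR$, approximate the function equal to $0$ on $\gamma_1$ and $g$ on $\gamma_2$ via Alexander's theorem to obtain $g_1$, and set $g_2:=g-g_1$. You merely spell out the geometric joining and the extension of $\eps$ in more detail than the paper does.
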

\begin{proof} 
Let $\gamma\su\CC$ be a $\cC^1$-smooth injective curve such that $\gamma_1\cup\gamma_2\su\gamma$. There is a continuous function $h:\gamma\lon\CC$ such that $h=0$ on $\gamma_1$ and $h=g$ on $\gamma_2$. From Theorem \ref{tw4} there exists an entire function $g_1$ such that $|g_1-h|<\eps$ on $\gamma$. Therefore, functions $g_1$ and $g_2:=g-g_1$ satisfy the claim.
\end{proof}
\bigskip\bigskip
We have prepared all tools for the {\bf proof of Theorem \ref{tw3} (necessity)}:

Let $g$ be the function from Theorem \ref{tw6}, suitable for $f$. By Lemma \ref{4} applied to the half-lines $\gamma_{1,\theta_1}$, $\gamma_{1,\theta_2}$, where $\theta_1\neq\theta_2$, and functions $\zeta\longmapsto e^{-|\zeta|^\eta}$ and $g$, we get entire functions $g_1,g_2$ such that $$g=g_1+g_2,$$ $$|g_j(\zeta)|<Me^{-|\zeta|^\eta},\quad\zeta\in\gamma_{0,\theta_j},\ j=1,2,$$ with some constant $M$.

From Lemma \ref{9}, the corresponding functions $\phi_{\theta_j}$ for $g_j$ satisfy the conditions $(iv),(v),(vi),(viii)$ of Theorem \ref{tw3} for $g_j$ and $$h_{\phi_{\theta_j}}(\wi\theta)=-\infty,\quad|\wi\theta|<\pi/2.$$ Hence the function $\phi_g:=\phi_{\theta_1}+\phi_{\theta_2}$ satisfies the conditions $(iv),(v),(vi),(viii)$ of Theorem \ref{tw3} for $g$, instead of $f$, and $$h_{\phi_g}(\wi\theta)=-\infty,\quad|\wi\theta|<\pi/2.$$

Suppose that some function $\phi_{f-g}$ satisfies the necessary condition of Theorem \ref{tw3} for $f-g$. Then $\phi_f:=\phi_{f-g}+\phi_{g}$ satisfies the necessary condition of Theorem \ref{tw3} for $f$ (to see the condition $(vii)$ we use the property \eqref{6}). 

Concluding, it suffices to prove that there exists a function $\phi_{f-g}$ satisfying the necessary condition of Theorem \ref{tw3} for $f-g$. 

The inequality \eqref{25} implies that that for any $\delta\in(0,\pi)$ there is a constant $M_\delta$ depending only on $\delta$, such that the function $F:=f-g\in\OO(\cd)$ satisfies \begin{equation*}|F(\zeta)|\leq M_\delta e^{-|\zeta|^\eta},\quad\zeta\in\CC\sm\inte\Delta(-\delta,\delta),\end{equation*} \begin{equation}\label{22}|F(\zeta)|\leq M_\delta,\quad |\zeta|\leq e^{-\delta}.\end{equation} 

By Lemma \ref{9} used to the function $F$ and parameters $r\in(0,1)$,\, $\theta:=-\pi$, the formulas $$\phi_{r,\pi}(z):=\frac{1}{2\pi i}\int_{\gamma_{r,-\pi}^-\cup r\TT}\underbrace{e^{(-z-1)\Log_{-\pi}\zeta}F(\zeta)}_{=:\alpha_{z}(\zeta)}d\zeta+\frac{1}{2\pi i}\int_{\gamma_{r,-\pi}}\underbrace{e^{(-z-1)(\Log_{-\pi}\zeta+2\pi i)}F(\zeta)}_{=:\beta_{z}(\zeta)}d\zeta$$ define one and the same entire function $\phi$ satisfying the conditions $(iv),(v),(vi),(viii)$ of Theorem \ref{tw3} for $F$. The only task is to estimate the indicator function.

Note that $\Arg_{-\pi}$ and $\Log_{-\pi}$ restricted to $\CC\sm(-\infty,0]$ coincide with the standard main argument and logarithm. In particular, $$\Arg_{-\pi}e^{\pm it}=\Arg_{\pm t}e^{\pm it}=\pm t,\quad t\in(0,\pi).$$

For $r\in(0,1)$ and $\delta\in(0,\pi)$ consider the boundary of the domain $$D_r^\delta:=(\CC\sm\Delta(-\delta,\delta))\cup r\DD,$$ negatively oriented w.r.t. $D_r^\delta$, that is a curve $(\pa D_r^\delta)^-$ consisting of three pieces 
\begin{itemize}
\item $\gamma_{r,\delta}^-$;
\item $(r\TT\cap\Delta(-\delta,\delta))^-:=r\TT\cap\Delta(-\delta,\delta)$ negatively oriented w.r.t. $r\DD$;
\item $\gamma_{r,-\delta}$.
\end{itemize}
We claim that for any $z\in\CC$ the number \begin{equation}\label{17}2\pi i\phi(z)+\int_{(\pa D_r^\delta)^-}\alpha_{z}=\int_{\gamma_{r,-\pi}^-\cup r\TT}\alpha_{z}+\int_{\gamma_{r,-\pi}}\beta_{z}+
\int_{\gamma_{r,\delta}^-\cup(r\TT\cap\Delta(-\delta,\delta))^-\cup\gamma_{r,-\delta}}\alpha_{z}\end{equation} is equal to $$\lim_{R>0,\ R\to\infty}\int_{R\TT\cap D_r^\delta}\alpha_{z}$$ with $R\TT\cap D_r^\delta$ positively oriented w.r.t. $R\DD$. Actually, the above sum (over a path showed on Figure 1) is a limit as $R\to\infty$ of the integrals over the curves intersected with $R\CDD$.

\begin{figure}[ht]\includegraphics[scale=0.75]{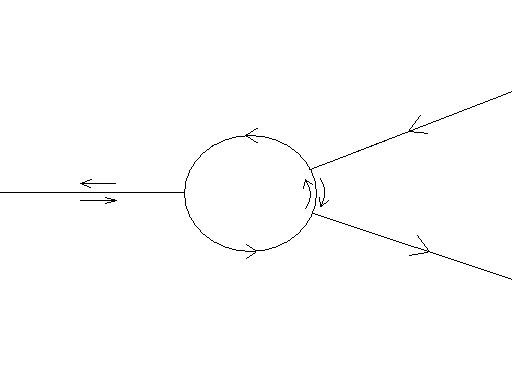}\caption{Path from the right side of \eqref{17}}\end{figure}

Then we approximate a path consisting of the following eight curves (Figure 2): 
\begin{itemize}
\item $\gamma_{r,-\pi}^-\cap R\CDD$;
\item $r\TT$;
\item $\gamma_{r,-\pi}\cap R\CDD$;
\item $R\TT\cap D_r^\delta\cap\{\im\zeta\geq 0\}$ negatively oriented w.r.t. $R\DD$;
\item $\gamma_{r,\delta}^-\cap R\CDD$;
\item $(r\TT\cap\Delta(-\delta,\delta))^-$;
\item $\gamma_{r,-\delta}\cap R\CDD$;
\item $R\TT\cap D_r^\delta\cap\{\im\zeta\leq 0\}$ negatively oriented w.r.t. $R\DD$;
\end{itemize}
by closed paths contained in $D_r^\delta$ (Figure 3) and apply the Cauchy Theorem for these paths and the function $\alpha_{z}\in\OO(D_r^\delta\sm(-\infty,0])$.

\begin{figure}[ht]\includegraphics[scale=0.75]{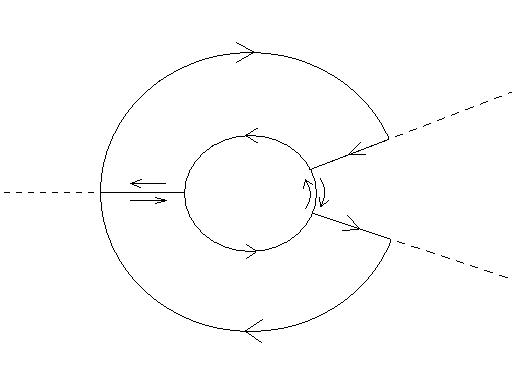}\caption{Approximated path (drawn by continuous lines)}\end{figure}\begin{figure}[ht]\includegraphics[scale=0.75]{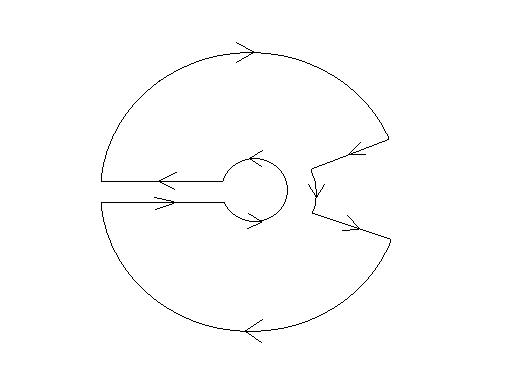}\caption{Approximating path}\end{figure} 

On the other side, for $z\in\CC$ \begin{multline*}\left|\int_{R\TT\cap D_r^\delta}\alpha_{z}\right|\leq
\int_{(-\pi,-\delta)\cup(\delta,\pi)}|e^{(-z-1)(\log R+it)}F(Re^{it})iRe^{it}|dt\\\leq
M_\delta e^{-R^\eta}R^{-\re z}\int_{(-\pi,-\delta)\cup(\delta,\pi)}e^{t\im z}dt\\\leq 2\pi e^{\pi|\im z|}M_\delta e^{-R^\eta}R^{-\re z}\to 0,\quad R\to\infty.\end{multline*}

So far, we have obtained $$\phi(z)=-\frac{1}{2\pi i}\int_{(\pa D_r^\delta)^-}\alpha_{z},\quad z\in\CC,\ r\in(0,1),\ \delta\in(0,\pi).$$ 

In what follows, let $r:=e^{-\delta}$, $\delta\in(0,\pi)$. 

We estimate for $z\in H$ \begin{eqnarray*}\left|\int_{\gamma_{r,\delta}^-}\alpha_{z}\right|&\leq&\int_r^\infty|e^{(-z-1)(\log\varrho+i\delta)}F(\varrho e^{i\delta})e^{i\delta}|d\varrho\\&\leq&\int_r^\infty\varrho^{-\re z-1}e^{\delta\im z}M_\delta e^{-\varrho^\eta}d\varrho\\&\leq&M_\delta e^{\delta\im z}\left(\int_r^1\varrho^{-\re z-1}e^{-\varrho^\eta}d\varrho+\int_1^\infty\varrho^{-\re z-1}e^{-\varrho^\eta}d\varrho\right)\\&\leq&
M_\delta e^{\delta\im z}\left(\int_r^1\varrho^{-|z|}\varrho^{-1}d\varrho+
\int_1^\infty e^{-\varrho^\eta}d\varrho\right)\\&\leq&M_\delta e^{\delta\im z}\left(\int_r^1r^{-|z|}\varrho^{-1}d\varrho+C\right)\\&=&M_\delta e^{\delta\im z}\left(r^{-|z|}\log(1/r)+C\right)\end{eqnarray*} with a constant $C$.
Analogously $$\left|\int_{\gamma_{r,-\delta}}\alpha_{z}\right|\leq M_{-\delta}e^{-\delta\im z}\left(r^{-|z|}\log(1/r)+C\right).$$
This gives $$\left|\int_{\gamma_{r,\delta}^-\cup\gamma_{r,-\delta}}\alpha_{z}\right|\leq M_\delta e^{\delta\im z}(e^{\delta|z|}\delta+C)+M_{-\delta}e^{-\delta\im z}(e^{\delta|z|}\delta+C)\leq K_\delta e^{2\delta|z|}$$ for $z\in H$ and some constant $K_\delta$ depending only on $\delta$. 

For the integral over the arc, we proceed as follows (recall that the inequality \eqref{22} holds) \begin{multline*}\left|\int_{r\TT\cap\Delta(-\delta,\delta)}\alpha_{z}\right|\leq \int_{-\delta}^{\delta}|e^{(-z-1)(\log r+it)}F(re^{it})ire^{it}|dt\leq M_\delta r^{-\re z}\int_{-\delta}^{\delta}e^{t\im z}dt\\\leq M_\delta e^{\delta\re z}2\delta e^{\delta|\im z|}\leq 2\delta M_\delta e^{2\delta|z|},\quad z\in\CC.\end{multline*} 

Finally, $$|\phi(z)|=\frac{1}{2\pi}\left|\int_{(\pa D_r^\delta)^-}\alpha_{z}\right|\leq L_\delta e^{2\delta|z|},\quad z\in H,$$ for a constant $L_\delta$ depending only on $\delta$, hence $$h_\phi(\wi\theta)\leq 2\delta,\quad|\wi\theta|<\pi/2.$$ Taking $\delta\to 0$, we finish the proof.\qed

\bigskip\bigskip\bigskip
\textsc{Acknowledgements.} I would like to thank Professor Pawe\l\ Doma\'nski for scientific care during writing this paper.
\bigskip\bigskip

\end{document}